 \patchcmd\Gread@eps{\@inputcheck#1 }{\@inputcheck"#1"\relax}{}{}
\newtheorem{theorem}{Theorem}[section]
\newtheorem{lemma}[theorem]{Lemma}
\newtheorem{proposition}[theorem]{Proposition}
\newtheorem{remark}[theorem]{Remark}
\definecolor{light-gray}{gray}{0.95}
\def\centerarc[#1](#2)(#3:#4:#5){\draw[#1] ($(#2)+({#5*cos(#3)},{#5*sin(#3)})$) arc (#3:#4:#5);}
\newcommand{\vertiii}[1]{{\left\vert\kern-0.25ex\left\vert\kern-0.25ex\left\vert #1 
    \right\vert\kern-0.25ex\right\vert\kern-0.25ex\right\vert}}
\numberwithin{equation}{section}
\numberwithin{figure}{section}
\newcommand{\mc}[1]{{\mathcal #1}}
\newcommand{\mf}[1]{{\mathfrak #1}}
\newcommand{\bb}[1]{{\mathbb #1}}
\newcommand{\<}{\big\langle}
\renewcommand{\>}{\big\rangle}
\renewcommand{\epsilon}{\varepsilon}
\newcommand{\R}{\mathbb R}
\newcommand{\Z}{\mathbb Z}
\renewcommand{\P}{\mathbb P}
\newcommand{\E}{\mathbb E}
\newcommand{\gen}{\mathcal{L}} 
\newcommand{\gens}{\mathcal{S}} 
\newcommand{\gena}{\mathcal{A}}
\title[Tagged Particle in Asymmetric Exclusion with Long Jumps]{The motion of the Tagged Particle in asymmetric exclusion process with long jumps}
\author{Linjie Zhao}
\email{linjie\_zhao@hust.edu.cn}
\address{School of Mathematics and Statistics, Huazhong University of Science \& Technology, 430074, Wuhan, China.}
\thanks{\textbf{Acknowledgments.}  Zhao thanks the financial support from  the Fundamental Research Funds for the Central Universities in China.}
\keywords{asymmetric exclusion process; long jumps; tagged particle; central limit theorems}
\begin{document}

\maketitle

\begin{abstract}
We prove law of large numbers and invariance principles for the tagged particle in the asymmetric exclusion process with long jumps when the process starts from its equilibrium measure.
\end{abstract}

\section{Introduction}

It has been a longstanding problem to investigate the long time behavior of  a typical particle, usually called the \emph{tagged particle} or \emph{tracer particle}, in interacting particle systems. For exclusion processes which were introduced by Spitzer \cite{spitzer70} as one of the simplest interacting particle systems,  the motion of the tagged particle has naturally attracted much attention.  
Law of large numbers for the tagged particle on the lattice was proved by Saada \cite{saada1987tagged} and Rezakhanlou \cite{rezakhanlou1994evolution}.   When the process starts from its equilibrium measure,  central limit theorems and invariance principles were proved by  Arratia \cite{arratia1983motion} and Peligrad and Sethuraman \cite{peligrad2007fractional} in the one-dimensional nearest neighbor case, by Kipnis and Varadhan \cite{kipnis1986central} in the symmetric case in all dimensions except the one-dimensional nearest neighbor case, by Varadhan \cite{varadhan1995self} in the asymmetric mean-zero case, by Kipnis  \cite{Kipnis86} in the one-dimensional nearest neighbor asymmetric case  and by Sethuraman, Varadhan and Yau \cite{sethuraman2000diffusive} in the asymmetric case in three or higher dimensions.  In dimensions $d \leq 2$ when the underlying random walk has a drift except the one-dimensional nearest-neighbor case, we only know the tagged particle is diffusive as shown by Sethuraman \cite{sethuraman2006diffusive}, and a full CLT or invariance principle remains open.

We underline that the previous  work all assumed the transition probability of the underlying random walk has finite range, or at least has finite second moments. In this article, we consider the exclusion process with long jumps, whose symmetric version was first introduced by Jara \cite{jara2008longjumps} to derive fractional parabolic equations from interacting particle systems. Recently, hydrodynamic limits for the asymmetric exclusion process with long jumps were proved by Sethuraman  and Shahar \cite{sethuraman2018hydrodynamic}.  By considering its density fluctuations, Gon{\c c}alves and Jara \cite{gonccalves2018density} derived fractional Ornstein–Uhlenbeck process and fractional stochastic Burgers equation from the model.  Roughly speaking, in the long-jump case, a particle jumps from $x$ to $y$ at rate proportional to $\|x-y\|^{-d-\alpha}$, where $d$ is the dimension of the lattice and $\alpha > 0$ is some parameter.   The motion of the tagged particle in the symmetric exclusion process with long jumps has been investigated by Jara \cite{jara2009nonequilibrium}. Hence, we focus on the asymmetric case. 

We start the process from its equilibrium measure, \emph{i.e.}, the Bernoulli product measure with fixed density and conditioned on having a particle at the origin.  We prove law of large numbers (see Theorem \ref{thm:LLN}) and invariance principles (see Theorem \ref{thm:invariant}) for the tagged particle in different regimes of $\alpha$ and $d$. Since we are interested in the long-jump case, we assume $\alpha \leq 2$. We believe the same results should hold as in the finite range case if $\alpha > 2$. We also remark that invariance principles for $d=1, 3/2 \leq \alpha \leq 2$ remain open.

Since the tagged particle itself is not Markovian,  it is more convenient to consider the environment process as seen from the tagged particle, which turns out to be Markovian if the underlying transition probability is translation invariant. The main idea of our proof is to treat the exponential martingales associated with the tagged particle as in the symmetric case \cite{jara2009nonequilibrium}.  However, new difficulties arise due to the irreversibility of the asymmetric exclusion process, and we need to deal with occupation times of the environment process in some regimes of $d$ and $\alpha$.  To control the variance of the occupation times of the environment process, we compare them with the occupation times of the exclusion process with long jumps, which has been investigated previously by Bernardin,  Gon{\c c}alves and Sethuraman \cite{bernardin2016occupation}.

In \cite{jara2009nonequilibrium}, Jara also considered nonequilibrium central limit theorems for the tagged particle in the symmetric exclusion process with long jumps, whose techniques  heavily rely on the hydrodynamic limits for the underlying dynamics. Since the hydrodynamic limits for the asymmetric exclusion with long jumps are not fully proved mainly due to the fact that the uniqueness of the corresponding hydrodynamic equation remains open \cite{sethuraman2018hydrodynamic}, we leave it as a future work to prove nonequilibrium fluctuations for the tagged particle in this setting.

There has been a lot of  work concerning the behavior of the tagged particle in exclusion processes and thus it is impossible for us to list all of them.  At the end of this introduction, we mention the work of Jara and Landim \cite{jara2006nonequilibrium,jara2008quenched}, where nonequilibrium fluctuations were considered for the one-dimensional nearest neighbor exclusion process with/without bond disorder.  Large and moderate deviation principles were also derived respectively in the one-dimensional nearest neighbor case by Sethuraman and Varadhan \cite{sethuraman2013large} and by Xue and the author \cite{xue2022moderate}.

The rest of the article is organized as follows.  In Section \ref{sec:results} we introduce the model and the main results, \emph{i.e.}, law of large numbers  (Theorem \ref{thm:LLN}) and invariance principles (Theorem \ref{thm:invariant}) for the tagged particle. The proof of law of large numbers  is presented in Section \ref{sec:lln}.  In  Section \ref{sec:invariance} we prove convergence in the sense of finite dimensional distributions and in Section \ref{sec:tight} we prove tightness of the tagged particle process, which are sufficient to show the invariance principle for the tagged particle.

\section{Notation and Results}\label{sec:results}

For $d \geq 1$, let $\Z^d$ be the d-dimensional lattice and let $\Z^d_\star = \Z^d \backslash \{0\}$.
For some point $u \in \R^d$, we use $\|u\| =( \sum_{1 \leq j \leq d} u_j^2)^{1/2} $ to denote its $L^2$--norm and $|u| = \max_{1 \leq j \leq d} |u_j|$ to denote its uniform norm, where $u_j$ is the $j$--th coordinate of $u$. Note that the two norms are equivalent, $|u| \leq \|u\| \leq \sqrt{d} |u|$.

The exclusion process is a Markov process defined on the state space $\Omega^d = \{0,1\}^{\Z^d}$, whose infinitesimal generator acting on \emph{local} functions $f: \Omega^d \rightarrow \R$ as
\[	\mathbb{L} f (\eta) = \sum_{x,y \in \Z^d} p(y-x) \eta (x) (1-\eta(y)) [f(\eta^{x,y}) - f(\eta)]\]
where $\eta^{x,y}$ is the configuration obtained from $\eta$ after swapping the values of $\eta(x)$ and $\eta (y)$, 
\[\eta^{x,y} (z) = \begin{cases}
	\eta(x), \quad&z=y,\\
	\eta(y), \quad&z=x,\\
	\eta(z), \quad&z \neq x,y,
\end{cases}\]
and $p(\cdot)$ is some transition probability kernel on $\Z^d$.  Above, $f$ is local if and only if the value of $f$ depends only on finite sites of $\Z^d$. In this article, we are interested in  the asymmetric exclusion with long jumps, where the transition  kernel $p(\cdot)$ is given by
\begin{equation}\label{p}
p(z) = \frac{1}{\|z\|^{d+\alpha}} \Big[2\chi_{\{z_1 > 0\}} + \chi_{\{z_1 = 0\}} \Big], \quad \alpha > 0.
\end{equation}
We set $p(0) = 0$. Above, $\chi_A$ is the indicator function of the set $A$.  Note that $\alpha > 0$ is necessary in order $p(z)$ to be summable. We refer the readers to \cite{liggettips} for construction of the above exclusion process. Denote by $\{\eta_t\}_{t \geq 0}$ the Markov process with generator $\bb{L}$.

\begin{remark}
In general, we only need the transition kernel $p(\cdot)$ to be such that
\begin{enumerate}[(i)]
	\item $p(\cdot)$ could be extended to $\R^d\backslash\{0\}$, whose extension is still denoted by $p(\cdot)$ without confusion, such that $p(u/N) = N^{d+\alpha} p (u)$ for $u\neq 0$ and $N \geq 1$;
	\item the symmetric part of $p(\cdot)$ satisfies $(p(z)+p(-z))/2 = \|z\|^{-d-\alpha}$ for $z \in \Z^d_\star$.
\end{enumerate}
We assume $p(\cdot)$ to take the form \eqref{p} for simplicity.
\end{remark}

For $\rho \in [0,1]$, let $\nu_\rho$ be the  Bernoulli product measure on $\Omega^d$ with constant density $\rho$. It is well known that $\nu_\rho$ is invariant and ergodic for the evolution of the process $\{\eta_t\}_{t \geq 0}$ (see \cite{liggettips} for example).

We shall investigate the behavior of the tagged particle. More precisely, we initially put a particle at the origin and denote by $X_t$ the position of the \emph{tagged} particle at time $t$. Obviously we have $X_0 = 0$. Note that $\{X_t\}_{t \geq 0}$ itself is not a Markov process. Therefore, it is more convenient to work on the \emph{environment process}  as seen from the tagged particle, which  is defined as 
\[\xi_t (z) = \eta_t (X_t + z), \quad z \in \Z^d_\star.\]
Then it is easy to see that $\{\xi_t\}_{t\geq0}$ is a Markov process on the state space  $\Omega^d _\star= \{0,1\}^{\Z^d_\star}$ with  infinitesimal generator acting on local functions  $f: \Omega^d _\star \rightarrow \R$ as 
\[	\gen f (\xi) = \sum_{x,y \in \Z^d_\star} p(y-x) \xi(x) (1-\xi(y)) [f(\xi^{x,y}) - f(\xi)]
+ \sum_{z\in \Z^d_\star}  p(z) (1-\xi (z)) [f(\theta_z \xi) - f(\xi)],\]
where $\xi^{x,y}$ is the environment configuration obtained from $\xi$ after swapping the values of $\xi(x)$ and $\xi(y)$, and $\theta_z \xi$ is the one obtained from $\xi$ after the tagged particle jumps to $z$,  $(\theta_z \xi) (x) = \xi (x+z)$ for $x \neq -z$ and $(\theta_z \xi) (-z) = 0$.
For $\rho \in [0,1]$, denote by $\nu_\rho^\star$ the measure of $\nu_\rho$ conditioned on having a particle at the origin, 
\[\nu_\rho^\star (\cdot) = \nu_\rho (\;\cdot\; | \eta(0)=1).\]
 It is also well known \cite{liggettips} that $\nu_\rho^\star$ is invariant and ergodic for the  environment process $\{\xi_t\}_{t \geq 0}$.
 
Fix $T > 0$. For any probability measure $\mu$ on $\Omega^d$, denote by $\P_\mu$ the probability measure on the path space $D([0,T],\Omega^d)$ induced by the process $\{\eta_t\}_{t \geq 0}$ and the initial measure $\mu$, and by $\E_\mu$ the corresponding expectation. 

The first result of the article is about law of large numbers for the tagged particle when the environment process starts from its equilibrium.

\begin{theorem}[Law of large numbers]\label{thm:LLN}
Suppose the initial measure of the process $\{\eta_t\}_{t \geq 0}$ is $\nu_\rho^\star$ for some $\rho \in (0,1)$. 
\begin{enumerate}[(i)]
		\item if $\alpha = 1$, then 
	\begin{equation}\label{thm:lln_cri}
		\lim_{N \rightarrow \infty} \frac{X_{tN/\log N}}{N} = \gamma_d e_1
	\end{equation}
	in $\P_{\nu_\rho^\star}$--probability, where $e_1 \in \R^d$ is the unit vector with the first component equal to one and the others equal to zero, and 
	\[\gamma_d := \lim_{N \rightarrow \infty}   \frac{1}{\log N} \sum_{\|z\| \leq N} z_1 p(z). \]
	\item  if $\alpha >1$, then
	\begin{equation}\label{thm:lln_sup}
		\lim_{N \rightarrow \infty} \frac{X_{tN}}{N} = t(1-\rho) m 
	\end{equation}
	in $\P_{\nu_\rho^\star}$--probability, where $m := \sum_{z \in \Z^d_\star} z p(z)$.
\end{enumerate}
\end{theorem}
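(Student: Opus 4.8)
\emph{Proof strategy.} The plan is to combine the martingale decomposition of the tagged particle with the ergodic theorem for the environment process $\{\xi_t\}$. Since the tagged particle at $X_t$ jumps to $X_t+z$ at rate $p(z)(1-\xi_t(z))$, for a cutoff $\ell\ge1$ I would split $X_t=X_t^{\le\ell}+X_t^{>\ell}$ according to whether the jump has size $\|z\|\le\ell$, and write
\[
X_t^{\le\ell}=M_t^{\le\ell}+\int_0^t b_\ell(\xi_s)\,ds,\qquad b_\ell(\xi):=\sum_{\|z\|\le\ell}z\,p(z)\bigl(1-\xi(z)\bigr),
\]
where $M^{\le\ell}$ is a mean-zero martingale whose predictable quadratic variation is bounded componentwise by $t\sum_{\|z\|\le\ell}z_i^2\,p(z)$; everything makes sense because $\sum_z p(z)<\infty$. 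The ingredients are then an $L^2$ estimate for $M^{\le\ell}$, a bound on the discarded large scales ($X^{>\ell}$ and $b-b_\ell$), and the fact, recorded in the excerpt, that $\{\xi_t\}$ is stationary and ergodic under $\nu_\rho^\star$; together with $\E_{\nu_\rho^\star}[\xi(z)]=\rho$ for $z\in\Z^d_\star$ the ergodic theorem gives $\frac1T\int_0^T b_\ell(\xi_s)\,ds\to(1-\rho)\sum_{\|z\|\le\ell}z\,p(z)$.

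\emph{The regime $\alpha>1$.} Here $\sum_z\|z\|p(z)<\infty$, so $m$ is finite; I would fix $\ell$ and run the clock to $tN$. Then $\E_{\nu_\rho^\star}\|M^{\le\ell}_{tN}\|^2\le C_\ell\,tN$, so $M^{\le\ell}_{tN}/N\to0$ in $L^2$; $\E_{\nu_\rho^\star}\|X^{>\ell}_{tN}\|\le tN\sum_{\|z\|>\ell}\|z\|p(z)$, which is at most $\varepsilon N$ once $\ell$ is large, uniformly in $N$; and $\frac1N\int_0^{tN}b_\ell(\xi_s)\,ds\to t(1-\rho)\sum_{\|z\|\le\ell}z\,p(z)$ in probability by the ergodic theorem. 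Letting $N\to\infty$ and then $\ell\to\infty$, and using $\sum_{\|z\|\le\ell}z\,p(z)\to m$, gives \eqref{thm:lln_sup}.

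\emph{The regime $\alpha=1$.} Now $\sum_z\|z\|p(z)$ diverges only logarithmically, which is why the correct clock is $T_N=tN/\log N$; the natural move is to let the cutoff grow, $\ell=N$. First, the expected number of jumps of size $>N$ on $[0,T_N]$ is at most $T_N\sum_{\|z\|>N}p(z)=O(1/\log N)\to0$, so $\P_{\nu_\rho^\star}(X_{T_N}\ne X^{\le N}_{T_N})\to0$ and it suffices to analyze $X^{\le N}_{T_N}$. Since $\sum_{\|z\|\le N}z_i^2\,p(z)=O(N)$, one gets $\E_{\nu_\rho^\star}\|M^{\le N}_{T_N}/N\|^2\le C\,T_N/N=O(1/\log N)\to0$. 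For the drift I would write $b_N=b_K+(b_N-b_K)$ with $K$ fixed: the $b_K$ term is deterministically $O\bigl((T_N/N)\|b_K\|_\infty\bigr)=O(1/\log N)\to0$, while the middle-scale term $\frac1N\int_0^{T_N}(b_N-b_K)(\xi_s)\,ds$ I split into its mean and a fluctuation. The mean equals $\tfrac{t}{\log N}(1-\rho)\bigl(\sum_{\|z\|\le N}z\,p(z)-\sum_{\|z\|\le K}z\,p(z)\bigr)$, which tends as $N\to\infty$ to $t(1-\rho)\gamma_d e_1$ (the components $z_2,\dots,z_d$ vanish because $p$ is even in each of those coordinates, and what survives is $\gamma_d$ by definition). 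The fluctuation has second moment at most $\tfrac{T_N^2}{N^2}\operatorname{Var}_{\nu_\rho^\star}(b_N-b_K)=\tfrac{t^2}{(\log N)^2}\,\rho(1-\rho)\sum_{K<\|z\|\le N}\|z\|^2\,p(z)^2$ by Cauchy--Schwarz in time and the independence of the $\xi(z)$ under $\nu_\rho^\star$; since $\sum_{K<\|z\|\le N}\|z\|^2\,p(z)^2=O(1)$ uniformly in $N$, this is $O(1/(\log N)^2)\to0$. Assembling the pieces yields \eqref{thm:lln_cri}.

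\emph{Main obstacle.} The delicate regime is $\alpha=1$, where the ergodic time average and a cutoff $\ell=N$ growing with the system size must be controlled simultaneously: the drift is a ``non-summable'' observable whose running average over $[0,T_N]$ survives only after the $1/\log N$ rescaling, so one must show that the bounded fixed-scale part of the drift washes out while the middle scales self-average. The point that makes this tractable is that here the crude $L^2$ (Cauchy--Schwarz in time) bound already suffices, precisely because $\sum_z\|z\|^2\,p(z)^2$ converges; the sharper occupation-time variance estimates needed for the invariance principle are not required. A secondary technical point is that jumps of size larger than $N$ — each of which would move the particle by order $N$, and whose expected total displacement is infinite — must be disposed of at the level of probabilities rather than expectations.
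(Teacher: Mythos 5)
Your argument is correct, and it ultimately rests on the same two core estimates as the paper's proof --- the ergodic theorem applied to the truncated drift when $\alpha>1$, and the Cauchy--Schwarz-in-time variance bound $\tfrac{t^2}{(\log N)^2}\sum\|z\|^2p(z)^2=O((\log N)^{-2})$ when $\alpha=1$ --- but it reaches them by a different route. The paper works with the exponential martingale \eqref{mn} and proves convergence of characteristic functions, which forces it to control the real part of the compensator uniformly in $N$ before invoking dominated convergence; you instead use the additive Doob decomposition $X^{\le\ell}_t=M^{\le\ell}_t+\int_0^t b_\ell(\xi_s)\,ds$ (the same decomposition the paper only introduces later, in \eqref{xNl}, for tightness) and dispose of the martingale part by a direct second-moment bound. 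For the law of large numbers your route is the more elementary of the two; the paper's choice of the exponential martingale is really dictated by the invariance principle, where the additive decomposition would not by itself identify the L\'evy limit. Your treatment of the large jumps at $\alpha=1$ --- showing that with probability $1-O(1/\log N)$ no jump of size $>N$ occurs before time $tN/\log N$, rather than bounding their contribution to the compensator as the paper does for the first term of \eqref{eq:4.3} --- is a clean and valid substitute. Finally, note that the limit $t(1-\rho)\gamma_d e_1$ you obtain in the critical case is exactly what the paper's own proof establishes (see \eqref{lln_cri1}); the factor $t(1-\rho)$ is simply missing from the displayed statement \eqref{thm:lln_cri}, so the apparent discrepancy is a typo in the theorem, not an error on your part.
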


\begin{remark}
	For $0< \alpha < 1$, from Theorem \ref{thm:invariant} below, for any $\varepsilon > 0$,
	\[\lim_{N \rightarrow \infty} \frac{X_{tN^{\alpha-\varepsilon}}}{N} = 0 \]
	in $\P_{\nu_\rho^\star}$--probability.
\end{remark}

The second result  concerns about invariance principles for the tagged particle when the environment process starts from its equilibrium.  For $N \geq 1$, define
\[\overline{X}^N_t = \begin{cases}
X_{tN^\alpha}, \quad &\text{if }0 < \alpha < 1,\\
X_{tN} - tN(1-\rho)  \sum_{\|z\| \leq N} z p(z), \quad &\text{if }  \alpha =1,\\
X_{tN^\alpha} - tN^\alpha(1-\rho)  m , \quad &\text{if } 1 < \alpha < 2,\\
X_{tN^2/\log N} - t(N^2 / \log N)(1-\rho)  m , \quad &\text{if } \alpha = 2.
\end{cases}\]
For $\alpha > 0$, let $\{Y_{\alpha,t}\}_{t \geq 0}$ be the L{\' e}vy process such that  $Y_{\alpha,0} = 0$ and that for any $\beta \in \R$ and $a \in \R^d$,
\[\log E \big[\exp \{i \beta (Y_{\alpha,t} \cdot a) \}\big] = t (1- \rho) \Phi_{\alpha,a} (\beta),\]
where
\begin{enumerate}[(i)]
	\item if $0 < \alpha < 1$, then
	\[\Phi_{\alpha,a} (\beta) = \int_{u_1 > 0} \|u\|^{-d-\alpha} (e^{i \beta( u \cdot a)} - 1)\,du.\]
		\item if $ \alpha = 1$, then
	\[\Phi_{\alpha,a} (\beta) = \int_{u_1 > 0} \|u\|^{-d-1} \big(e^{i \beta( u \cdot a)} - 1- i \beta ( u \cdot a) \chi_{\{\|u\|\leq 1\}} \big)\,du.\]
	\item if $1 < \alpha < 2$, then 
	\[\Phi_{\alpha,a}  (\beta) = \int_{u_1 > 0} \|u\|^{-d-\alpha} \big(e^{i \beta (u \cdot a) } -1 - i \beta (u \cdot a)\big)\,du.\]
	\item if $\alpha =  2$, then 
	\[\Phi_{\alpha,a}  (\beta) =- \frac{\beta^2 }{2} a^T \cdot Da. \]
\end{enumerate}
Above, $D = (D_{i,j})_{1 \leq i,j \leq d}$ is a $d \times d$ matrix given by
\[D_{i,j} = \lim_{N \rightarrow \infty} \frac{1}{\log N} \sum_{\|z\|\leq N} z_i z_j p(z).\]
We underline that in the case $\alpha =  2$, $Y_{\alpha,t}$ is actually a $d$--dimensional Brownian motion with mean zero and covariance matrix $(1-\rho)D$. 

\begin{theorem}[Invariance principle]\label{thm:invariant}
Under the assumptions in Theorem \ref{thm:LLN},  if $d=1$ and $0 < \alpha < 3/2$, or $d \geq 2$ and $0 < \alpha \leq 2$, then 
\[\big\{N^{-1}\overline{X}_{t}^N  \big\}_{0 \leq t \leq T} \Rightarrow \{Y_{\alpha,t}\}_{0 \leq t \leq T}, \quad N \rightarrow \infty.\]
Above, the convergence is in the sense of  Skorohod topology in the path space $D([0,T],\R^d)$.
\end{theorem}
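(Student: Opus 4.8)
The plan is to adapt the strategy of Jara \cite{jara2009nonequilibrium} for the symmetric case, working throughout with the environment process $\{\xi_t\}_{t\geq0}$, which is Markov and stationary under $\nu_\rho^\star$, and exploiting the exponential martingales attached to the tagged particle. The first step is to decompose $\overline X^N_t$ into a martingale term and an additive functional of the environment. Since $(\xi_t,X_t)$ is jointly Markov and the only part of the dynamics acting on the position variable is the jump kernel $\sum_z p(z)(1-\xi(z))[\,\cdot\,(x+z)-\,\cdot\,(x)]$, Dynkin's formula applied to (truncations of) the coordinate functions gives, when $\alpha>1$,
\[X_t=\int_0^t\phi(\xi_s)\,ds+M_t,\qquad \phi(\xi)=\sum_{z\in\Z^d_\star}z\,p(z)\,(1-\xi(z)),\]
with $M_t$ a martingale whose jumps coincide with those of $X_t$ and with $\E_{\nu_\rho^\star}[\phi]=(1-\rho)m$; for $0<\alpha<1$ no compensation is needed and $X_t$ is treated as a pure jump process, while for the critical exponents $\alpha=1,2$ one uses the truncated drift $\sum_{\|z\|\le N}z\,p(z)(1-\xi(z))$ matching the centering in the definition of $\overline X^N$. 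In every regime this yields
\[N^{-1}\overline X^N_t=N^{-1}M^N_t+N^{-1}\!\int_0^{t\,a_N}\!\big(\phi^{(N)}(\xi_s)-\E_{\nu_\rho^\star}[\phi^{(N)}]\big)\,ds,\]
where $a_N$ is the relevant time scale and $M^N$ is the corresponding (centred, and for $\alpha=2$ truncated at the large jumps) martingale.

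Next I would show $N^{-1}M^N_\cdot\Rightarrow Y_{\alpha,\cdot}$ in the sense of finite-dimensional distributions. The tool is the exponential martingale $e^{i\lambda\cdot X_t}\exp\{-\int_0^t\psi_\lambda(\xi_s)\,ds\}$ with $\psi_\lambda(\xi)=\sum_z p(z)(1-\xi(z))\big(e^{i\lambda\cdot z}-1-i\lambda\cdot z\,\chi_{\{\cdots\}}\big)$: it gives $\E_{\nu_\rho^\star}[e^{i\lambda\cdot X_t}]=e^{t(1-\rho)\,\psi^{\mathrm{eq}}_\lambda}+(\text{error})$, where $\psi^{\mathrm{eq}}_\lambda=\sum_z p(z)(e^{i\lambda\cdot z}-1-i\lambda\cdot z\,\chi_{\{\cdots\}})$ and the error is controlled by the $L^2(\nu_\rho^\star)$-norm (equivalently an occupation-time variance) of $\psi_\lambda-\E_{\nu_\rho^\star}[\psi_\lambda]$. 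Choosing $\lambda=\beta a/N$ and $t\mapsto t\,a_N$, a Riemann-sum approximation turns $a_N(1-\rho)\psi^{\mathrm{eq}}_{\beta a/N}$ into $t(1-\rho)\Phi_{\alpha,a}(\beta)$; in the case $\alpha=2$ the preliminary truncation of the large jumps replaces the logarithmically divergent second moment by the Gaussian symbol $-\tfrac{\beta^2}{2}a^T Da$. The Markov property of $\{\xi_t\}$ together with the stationarity of $\nu_\rho^\star$ upgrades this to convergence of all finite-dimensional distributions, once the error terms are shown to be negligible.

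The hard part will be exactly this last point, i.e. the occupation-time control
\[\lim_{N\to\infty}\E_{\nu_\rho^\star}\Big[\sup_{0\le t\le T}\Big|N^{-1}\!\int_0^{t\,a_N}\!\big(\phi^{(N)}(\xi_s)-\E_{\nu_\rho^\star}[\phi^{(N)}]\big)\,ds\Big|\Big]=0,\]
and the analogous bound for $\psi_\lambda$. Writing $\phi^{(N)}(\xi)-\E_{\nu_\rho^\star}[\phi^{(N)}]=-\sum_z z\,p(z)\,(\xi(z)-\rho)$ (with the cutoff for $\alpha=1,2$), a degree-one function of the occupation variables, stationarity and a Kolmogorov-type maximal inequality reduce the claim to $\mathrm{Var}_{\nu_\rho^\star}\big(\int_0^{T'}(\phi^{(N)}(\xi_s)-\E_{\nu_\rho^\star}[\phi^{(N)}])\,ds\big)=o(N^2)$ at the relevant time $T'$. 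When the $H_{-1}$-norm of this function with respect to the symmetric part of $\gen$ — which coincides, apart from the removed origin, with the symmetric long-jump exclusion generator, whose first-chaos symbol behaves like $|\theta|^{\alpha}$ near $\theta=0$ (with a logarithmic correction at $\alpha=2$) — is finite, a Kipnis--Varadhan variance inequality (which applies once one checks a sector-type condition for the irreversible environment generator) yields a bound of order $T'$, enough for the case $d\ge2$ with $\alpha<2$. In the remaining regimes ($d=1$ with $1\le\alpha<3/2$, and $d=2$ with $\alpha=2$) this norm is infinite, and instead one compares the occupation time of the environment process with the occupation time of the bulk exclusion process with long jumps, whose variance was estimated by Bernardin, Gon{\c c}alves and Sethuraman \cite{bernardin2016occupation}; this comparison — expressing the environment functional through the bulk configuration around the tagged particle — is the genuinely new ingredient imposed by the irreversibility, and it is where the restriction $\alpha<3/2$ for $d=1$ enters, the quoted occupation-time variance then being $o(N^2)$ at the scales $N^\alpha$, $N$ and $N^2/\log N$.

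Finally I would prove tightness of $\{N^{-1}\overline X^N\}$ in $D([0,T],\R^d)$ for the Skorohod $J_1$-topology by Aldous's criterion: the additive-functional part is negligible by the previous step, and for the martingale part one bounds its predictable quadratic variation restricted to jumps of size $\le\delta N$ by $\int_0^{t\,a_N}\sum_{\|z\|\le\delta N}\|z\|^2 p(z)(1-\xi_s(z))\,ds$, whose expectation is of order $t\,\delta^{2-\alpha}$ and hence uniformly small in $N$ for $\delta$ small, while jumps of size $>\delta N$ are controlled directly by the tail mass $\sum_{\|z\|>\delta N}p(z)$ and, for $\alpha<2$, by the standard criteria for $J_1$-convergence to a L\'evy process with jumps. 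Combining the convergence of finite-dimensional distributions with tightness gives $\{N^{-1}\overline X^N_t\}_{0\le t\le T}\Rightarrow\{Y_{\alpha,t}\}_{0\le t\le T}$, which is the assertion of Theorem \ref{thm:invariant}. I expect the occupation-time estimate — and within it the environment-versus-bulk comparison in the low-dimensional irreversible regime — to be the decisive difficulty.
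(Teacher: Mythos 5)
Your overall route coincides with the paper's: exponential martingales for the characteristic function of the increments of $\overline{X}^N$, reduction to an occupation-time estimate for the degree-one functional $\sum_z (z\cdot a)p(z)\int_0^{\cdot}(\rho-\xi_s(z))\,ds$, handled by transience estimates when $d\geq 2$ and $1<\alpha<2$ and by comparison with the bulk occupation times of \cite{bernardin2016occupation} in the remaining regimes, and Aldous' criterion with a large-jump truncation for tightness. Two points deserve correction or completion.

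First, you make the Kipnis--Varadhan variance bound conditional on ``a sector-type condition for the irreversible environment generator''. Such a condition is not available here --- for asymmetric exclusion with nonzero drift the sector condition is precisely what fails in low dimensions, which is why the finite-range tagged-particle CLT is open for $d\leq 2$ --- so if your argument genuinely required it, it would stall. Fortunately it is not needed: the maximal inequality \cite[Proposition A1.6.1]{klscaling} (equivalently \cite[Lemma 3.9]{sethuraman2000central}) holds for any stationary Markov process and bounds the expected squared supremum by the variational $H_{-1}$ norm taken with respect to the symmetric part $\gens$ alone, after discarding the nonnegative term $\<\gena g,(\lambda-\gens)^{-1}\gena g\>_{\nu_\rho^\star}$. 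Replace the sector condition by this observation.

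Second, the step you rightly single out as decisive --- ``expressing the environment functional through the bulk configuration'' --- is left as a one-line wish, and it is not a one-liner. After reducing to $\|\xi(z)-\rho\|_{-1,\lambda,\gens}$ one must still (i) dominate the tagged-particle shift part $\gens^t$ of the Dirichlet form by the exchange part $\gens^e$ on fixed-degree functions (the paper's Lemma \ref{lem-a1}, a path decomposition plus Cauchy--Schwarz in the dual chaos variables), and (ii) restore the origin, comparing $\gens^e$ on $\Z^d_\star$ with the translation-invariant generator on $\Z^d$ (Lemma \ref{lem-a2}), before \cite[Theorem 2.8]{bernardin2016occupation} can be invoked; your exponent bookkeeping ($\alpha<3/2$ from $N^{\alpha(2-1/\alpha)}=o(N^2)$, the logarithmic corrections at $\alpha=1,2$) is correct, but the substance of the proof lives in those two comparison lemmas. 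Likewise, for $d\geq 2$ and $1<\alpha<2$, finiteness of the relevant $H_{-1}$ norm is not a first-chaos computation: the test function in the variational formula is an arbitrary $L^2$ function, and one needs the chaos-by-chaos Green's-function (``transience'') estimate of \cite{sethuraman2000diffusive}, uniform in the degree, to close the bound.
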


\begin{remark}
	As mentioned in the introduction, the case $\alpha > 2$ should be the same as in the asymmetric finite range case \cite{Kipnis86,sethuraman2000diffusive,sethuraman2006diffusive}. Also note that the case $d=1, 3/2 \leq \alpha \leq 2$ remains unsolved.
\end{remark}

\section{Law of large numbers}\label{sec:lln}

 In this section, we prove law of large numbers for the tagged particle (Theorem \ref{thm:LLN}). Along the proof, the constant $C$ may be different from line to line, but is independent of $N$. For $z \in \Z^d_\star$, let $N^z_t$ be the number of jumps of the tagged particle in the direction $z$ up to time $t$. It  follows immediately that
\begin{equation}\label{X_t}
	X_t = \sum_{z \in \Z^d_\star} z N_t^z.
\end{equation}
Since $N_t^z$ is a compound Poisson process with intensity $\int_0^t p(z) (1-\xi_s (z)) ds$ and   there are no common jumps between the processes $\{N_t^z\}_{z \in \Z^d_\star}$,  for any $a \in \R^d$, any $\beta \in \R$, and any $\gamma = \gamma (N)$, 
\begin{equation}\label{mn}
	\mc{M}_{t}^N (\beta) := \mc{M}_{t,a}^{N} (\beta) := \exp \Big\{i \beta (X_{t \gamma (N)} \cdot a) /N - \sum_{z \in \Z^d_\star} (e^{i\beta (z \cdot a)/N} - 1) p(z) \int_0^{t \gamma (N)} (1-\xi_s(z))\,ds\Big\}
\end{equation}
is a mean one complex martingale. The main idea is to prove convergence of the characteristic function by investigating the above exponential martingale.

\begin{proof}[Proof of Theorem \ref{thm:LLN}] We first prove the simpler case $\alpha > 1$. In order to prove \eqref{thm:lln_sup}, it is enough to show that for any $a \in \R^d$ and any $\beta \in \R$,
	\[\lim_{N \rightarrow \infty} \Big|  \E_{\nu_\rho^\star} \Big[ \exp \{ i \beta (X_{tN} \cdot a) /N\} \Big]  -  \exp \big\{  i \beta t(1-\rho) (m \cdot a)\big\} \Big| = 0. \]
	Choose $\gamma (N) = N$ in \eqref{mn}, then we may bound the above absolute value  by
	\[\E_{\nu_\rho^\star} \Big[ \Big| 1 -   \exp \Big\{  i \beta t(1-\rho) (m \cdot a)-   \sum_{z \in \Z^d_\star} (e^{i\beta (z \cdot a)/N} - 1) p(z) \int_0^{t N} (1-\xi_s(z))\,ds \Big\}  \Big| \Big].\]
	For two real numbers $\beta_j \in \R$, $j=1,2$, denote $\mathfrak{R} (\beta_1 + i \beta_2) = \beta_1$ the real part of the complex number $\beta_1 + i \beta_2$. Since $\alpha > 1$ and $|1-\xi(z)|\leq 1$, 
	\begin{multline*}
	\Big| \mathfrak{R} \Big( \sum_{z \in \Z^d_\star} (e^{i\beta (z \cdot a)/N} - 1) p(z) \int_0^{t N} (1-\xi_s(z))\,ds  \Big) \Big|  \leq \frac{t N }{N^{d+\alpha}}\sum_{z \in \Z^d_\star} |\cos(\beta (z \cdot a) / N)-1| p\big(\tfrac{z}{N}\big) \\
	\leq  C tN^{1-\alpha}  \Big( \int_{1}^\infty r^{-1-\alpha} dr + \int_{1/N}^1 r^{1-\alpha} dr \Big)
	\end{multline*}
for some constant $C = C(\beta,a)$. Note that the last line is bounded by $C t (N^{1-\alpha}+N^{-1})$ if $\alpha \neq 2$ and by $ C t N^{-1} \log N $ if $\alpha = 2$. Using dominated convergence theorem, to conclude the proof, we only need to prove that
	\begin{equation}\label{lln1}
		\lim_{N \rightarrow \infty} \sum_{z \in \Z^d_\star} (e^{i\beta (z \cdot a)/N} - 1) p(z) \int_0^{t N} (1-\xi_s(z))\,ds = i \beta t (1-\rho) (m \cdot a)
	\end{equation}
in $\P_{\nu_\rho^\star}$-probability. To this end, for any $K > 0$,  we bound the difference of the terms on both sides of the last line by
	\begin{multline}\label{lln2} 
\sum_{\|z\|>K} \big| e^{i\beta (z \cdot a)/N} - 1\big| p(z) \int_0^{tN} (1-\xi_s (z))\,ds
		\\+ \sum_{\|z\| \leq K} \big| e^{i\beta (z \cdot a)/N} - 1 -i \beta (z \cdot a) /N\big| p(z) \int_0^{tN} (1-\xi_s (z))\,ds\\
		+ \Big| i \beta  \sum_{\|z\| \leq K} (z \cdot a) p(z) \frac{1}{N}\int_0^{tN} (1-\xi_s (z))\,ds - i \beta t (1-\rho) (m \cdot a )\Big|.
	\end{multline}
	Using the basic inequality
	\begin{equation}\label{basic1}
		\Big| e^{i \theta} - \sum_{k=0}^{n} \frac{(i \theta)^k}{k!} \Big| \leq \frac{|\theta|^{n+1}}{(n+1)!}, \quad \forall \theta \in \R
	\end{equation}
	we bound the first two terms in \eqref{lln2} by 
	\[t \beta \sum_{\|z\| > K} |(z \cdot a)|p(z) + \frac{C t \beta^2 K^2 a^2}{N}.\]
Note that the last line converges to zero as $N \rightarrow \infty, K \rightarrow \infty$. Since the process $\{\xi_t\}_{t \geq 0}$ is ergodic, by ergodic theorem, as $N \rightarrow \infty$, the third term in \eqref{lln2} converges in $\P_{\nu_\rho^\star}$-probability  to
	\[\Big| i \beta t (1-\rho)  \sum_{\|z\| > K} (z \cdot a) p(z)  \Big|.\]
The above term converges to zero as $K \rightarrow \infty$ since the first moment of $p(z)$ is finite.  This proves \eqref{thm:lln_sup}.
	
Now we turn to the case $\alpha = 1$. Similar to the proof of Eq. \eqref{thm:lln_sup}, we only need to prove  
	\begin{equation}\label{lln_cri1}
		\lim_{N \rightarrow \infty}  \int_0^{tN/\log N} \sum_{z \in \Z^d_\star} (e^{i \beta (z \cdot a)/N} - 1) p (z) (1 - \xi_s (z)) ds = i \beta t (1-\rho) \gamma_d a_1 \quad 
	\end{equation}
in $\P_{\nu_\rho^\star}$-probability, where $a_1 = a \cdot e_1$ is the first component of  $a$. We first split  the term on the left side  of \eqref{lln_cri1} as 
	\begin{multline}\label{eq:4.3}
	\int_0^{tN/\log N} \sum_{\|z\| > N } (e^{i \beta (z \cdot a)/N} - 1) p (z) (1 - \xi_s (z)) ds \\
	+ \int_0^{tN/\log N} \sum_{\|z\| \leq N  } (e^{i \beta (z \cdot a)/N} - 1 - i \beta (z \cdot a)/N) p (z) (1 - \xi_s (z)) ds \\
	+ \frac{1}{N} \int_0^{tN/\log N} \sum_{\|z\| \leq N  } i \beta (z \cdot a) p (z) (1 - \xi_s (z)) ds.
	\end{multline}
Since $|e^{i \beta (z \cdot a)/N} - 1| \leq 2$, the absolute value of the first term in \eqref{eq:4.3} is bounded by
	\[\frac{Ct}{\log N} \int_{1}^\infty \frac{1}{r^{d+1}} r^{d-1} dr \leq \frac{Ct}{\log N}.\]
Using the inequality \eqref{basic1}, we bound the absolute value of the second term in \eqref{eq:4.3} by
	\[\frac{Ct}{\log N} \int_{\tfrac{1}{N}}^{1} \frac{r^2}{r^{d+1}} r^{d-1} dr \leq \frac{Ct}{\log N}.\]
To deal with the last term in \eqref{eq:4.3}, first note that
	\begin{multline*}
		\lim_{N \rightarrow \infty} \E_{\nu_\rho^\star} \Big[  \frac{1}{N} \int_0^{tN/\log N} \sum_{\|z\| \leq N} i \beta (z \cdot a) p (z) (1 - \xi_s (z)) ds \Big] \\= i \beta t (1-\rho) a \cdot \bigg\{ \lim_{N \rightarrow \infty}   \frac{1}{\log N} \sum_{\|z\| \leq N } z p(z)  \bigg\} =  i \beta t (1-\rho) \gamma_d a_1.
	\end{multline*}
Above, the first identity comes from the invariance of the measure $\nu_\rho^\star$, and the last one comes from the definition of $\gamma_d$. To  conclude the proof, we only need to show
\[\lim_{N \rightarrow \infty} {\rm Var}_{\nu_\rho^\star} \Big(  \frac{1}{N} \int_0^{tN/\log N} \sum_{\|z\| \leq N} i \beta (z \cdot a) p (z) (1 - \xi_s (z)) ds \Big)=0.\]
	By Cauchy-Schwarz inequality, the variance in the last line  is bounded by
	\[\frac{C t^2 \beta^2}{(\log N)^2} \sum_{\|z\| \leq N} |z \cdot a|^2 p(z)^2 \leq \frac{C t^2 \beta^2 }{N^d (\log N)^2} \int_{\frac{1}{N}}^{1} \frac{r^2}{r^{2d+2}} r^{d-1} dr \leq \frac{C t^2 \beta^2}{ (\log N)^2},\]
which converges to zero as $N \rightarrow \infty$. This  concludes the proof of law of large numbers.
\end{proof}

\section{Invariance Principle}\label{sec:invariance}

In this and the next sections, we prove Theorem \ref{thm:invariant}.  By \cite[Theorem 13.1]{billingsley2013convergence}, we only need to prove the convergence in the sense of finite dimensional distributions and tightness of the tagged particle process. We prove the former in this section and the latter in the next one.  The main aim of this section is to prove the following result.

\begin{proposition}\label{pro:charac} Under the assumptions in Theorem \ref{thm:invariant}, for any $a \in \R^d$, any fixed $t \geq 0$, and any $\beta \in \R$,
		\begin{equation}\label{X}
		\lim_{N \rightarrow \infty} \log \E_{\nu_\rho^\star} \Big[ \exp \{ i \beta (\overline{X}_{t}^N \cdot a) /N\} \Big] =   t(1-\rho) \Phi_{\alpha,a} (\beta).
	\end{equation}
\end{proposition}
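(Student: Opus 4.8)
The plan is to run the exponential martingale $\mc{M}^{N}_{t,a}(\beta)$ of \eqref{mn} with the time scale $\gamma(N)$ adapted to the regime: $\gamma(N)=N^\alpha$ when $0<\alpha<1$ or $1<\alpha<2$, $\gamma(N)=N$ when $\alpha=1$, and $\gamma(N)=N^2/\log N$ when $\alpha=2$, so that $\overline{X}^N_t=X_{t\gamma(N)}-b_N$ with $b_N$ the deterministic centering read off from the definition of $\overline{X}^N_t$ ($b_N=0$ for $0<\alpha<1$, $\ b_N=t\gamma(N)(1-\rho)\sum_{\|z\|\le N}zp(z)$ for $\alpha=1$, and $b_N=t\gamma(N)(1-\rho)m$ for $1<\alpha\le 2$). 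Writing $\theta^N_z:=\beta(z\cdot a)/N$ and $R^N_z:=\int_0^{t\gamma(N)}(\rho-\xi_s(z))\,ds$,
\[
V^N_t:=\sum_{z\in\Z^d_\star}\big(e^{i\theta^N_z}-1\big)p(z)\int_0^{t\gamma(N)}\big(1-\xi_s(z)\big)\,ds ,
\]
and $W^N_t:=V^N_t-\tfrac{i\beta}{N}(b_N\cdot a)$, we have $\mc{M}^{N}_{t,a}(\beta)=\exp\{i\beta(X_{t\gamma(N)}\cdot a)/N-V^N_t\}$, $\E_{\nu_\rho^\star}[\mc{M}^{N}_{t,a}(\beta)]=1$, and hence
\[
\E_{\nu_\rho^\star}\big[\exp\{i\beta(\overline{X}^N_t\cdot a)/N\}\big]=\E_{\nu_\rho^\star}\big[\mc{M}^{N}_{t,a}(\beta)\,e^{W^N_t}\big].
\]
Exactly the computation bounding the real part of the exponent in the proof of Theorem \ref{thm:LLN} (using $(p(z)+p(-z))/2=\|z\|^{-d-\alpha}$ and $0\le 1-\xi\le 1$, and, when $\alpha=2$, tracking the $\log N$) shows that $-\mathfrak R V^N_t=\sum_z(1-\cos\theta^N_z)p(z)\int_0^{t\gamma(N)}(1-\xi_s(z))\,ds$ is nonnegative and bounded above by a constant $C=C(t,\beta,a)$ uniformly in $N$, in every regime. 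Since $b_N\in\R^d$, this gives $\mathfrak R W^N_t=\mathfrak R V^N_t\in[-C,0]$ and $|\mc{M}^{N}_{t,a}(\beta)|=e^{-\mathfrak R V^N_t}\le e^{C}$. Consequently, if we prove that $W^N_t\to t(1-\rho)\Phi_{\alpha,a}(\beta)$ in $\P_{\nu_\rho^\star}$--probability, then $\mc{M}^{N}_{t,a}(\beta)\big(e^{W^N_t}-e^{t(1-\rho)\Phi_{\alpha,a}(\beta)}\big)$ is uniformly bounded and tends to $0$ in probability, so bounded convergence together with $\E_{\nu_\rho^\star}[\mc{M}^{N}_{t,a}(\beta)]=1$ yields \eqref{X}. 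The proof therefore reduces to this convergence in probability.

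To prove it, I would split $W^N_t$ into a deterministic part and a mean-zero fluctuation by inserting $\int_0^{t\gamma(N)}(1-\xi_s(z))\,ds=t\gamma(N)(1-\rho)+R^N_z$; since $\nu_\rho^\star$ is stationary and $z\ne 0$, $\E_{\nu_\rho^\star}[R^N_z]=0$, and
\[
W^N_t=\Big(t\gamma(N)(1-\rho)\sum_{z\in\Z^d_\star}\big(e^{i\theta^N_z}-1-i\theta^N_z\chi_z\big)p(z)\Big)+\sum_{z\in\Z^d_\star}\big(e^{i\theta^N_z}-1\big)p(z)\,R^N_z ,
\]
with $\chi_z\equiv0$ for $0<\alpha<1$, $\chi_z=\chi_{\{\|z\|\le N\}}$ for $\alpha=1$, and $\chi_z\equiv1$ for $1<\alpha\le2$; indeed $b_N$ is exactly the centering that cancels $t\gamma(N)(1-\rho)\sum_z i\theta^N_z\chi_z\,p(z)$. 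For the deterministic bracket, the scaling relation $p(u/N)=N^{d+\alpha}p(u)$, the bounds $|e^{i\theta}-1|\le\min(2,|\theta|)$ and $|e^{i\theta}-1-i\theta|\le\min(\theta^2/2,\,2+|\theta|)$, and dominated convergence turn it into a Riemann sum converging to $t(1-\rho)\Phi_{\alpha,a}(\beta)$; when $\alpha=2$ one keeps the truncation $\|z\|\le N$ and extracts the logarithm through $\tfrac1{\log N}\sum_{\|z\|\le N}z_iz_jp(z)\to D_{i,j}$, exactly as in the $\alpha=1$ step of the proof of Theorem \ref{thm:LLN}. It then remains to show that the fluctuation term $F^N_t:=\sum_{z\in\Z^d_\star}(e^{i\theta^N_z}-1)p(z)\,R^N_z$ tends to $0$ in $\P_{\nu_\rho^\star}$--probability.

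This last point is the heart of the argument and the only place where the irreversibility of $\gen$ intervenes. Since $\E_{\nu_\rho^\star}[F^N_t]=0$ it suffices to estimate the second moment, and by Cauchy--Schwarz
\[
\E_{\nu_\rho^\star}\big[|F^N_t|^2\big]\le\Big(\sum_{z\in\Z^d_\star}|e^{i\theta^N_z}-1|\,p(z)\,\sqrt{\E_{\nu_\rho^\star}[(R^N_z)^2]}\Big)^2 ,
\]
where $\E_{\nu_\rho^\star}[(R^N_z)^2]=2\int_0^{t\gamma(N)}(t\gamma(N)-u)\,\mathrm{Cov}_{\nu_\rho^\star}(\xi_0(z),\xi_u(z))\,du$, so one needs a quantitative bound on the occupation-time variance of the environment process. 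Because $\gen$ is not reversible, the standard reversible occupation-time machinery is unavailable; instead, following the strategy announced in the introduction, I would compare these occupation times with those of the exclusion process $\{\eta_t\}$ driven by the same kernel $p(\cdot)$ --- the quantities analysed by Bernardin, Gon\c{c}alves and Sethuraman in \cite{bernardin2016occupation} --- obtaining $\E_{\nu_\rho^\star}[(R^N_z)^2]\le C\varphi(t\gamma(N))$, where $\varphi(T)$ is of order $T$ in dimension $d\ge2$ (up to a factor $\log T$ in the single borderline case $d=2,\ \alpha=2$) and, for $d=1$, of order $T$, $T\log T$, $T^{2-1/\alpha}$ according as $0<\alpha<1$, $\alpha=1$, $1<\alpha<3/2$; here the ballistic drift of the tagged particle (absent for the exclusion process), which transports density fluctuations away from the tagged particle, is what keeps the environment occupation times no larger than the exclusion ones. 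Inserting these bounds together with $\sum_z|e^{i\theta^N_z}-1|p(z)=O(N^{-1})$ for $1<\alpha\le2$ (and $O(N^{-\alpha})$, resp.\ $O(N^{-1}\log N)$, for $0<\alpha<1$, resp.\ $\alpha=1$), the right-hand side above is $O\big(t^{2-1/\alpha}N^{2\alpha-3}\big)$ in the decisive case $d=1,\ 1<\alpha<2$ --- hence $o(1)$ precisely when $\alpha<3/2$, which is where the restriction on $\alpha$ for $d=1$ enters --- and $o(1)$ in every remaining regime with $d\ge2$ as well, the case $d=2,\ \alpha=2$ requiring the sharper form of the comparison rather than $\varphi(T)=T\log T$ verbatim. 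I expect this occupation-time comparison, and carrying the resulting variance estimate through all the regimes, to be the main obstacle; once $F^N_t\to 0$ is established, \eqref{X} follows.
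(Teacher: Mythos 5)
Your skeleton coincides with the paper's: the exponential martingale \eqref{mn} run at the regime-dependent time scale, the uniform bound on the real part of the compensator, bounded convergence to reduce \eqref{X} to convergence in probability of the compensator, the split into a deterministic Riemann-sum part converging to $t(1-\rho)\Phi_{\alpha,a}(\beta)$ and a mean-zero fluctuation, and the reduction of the latter to occupation-time variance bounds for the environment process compared against \cite{bernardin2016occupation}. Your exponent bookkeeping is also right: $N^{2\alpha-3}$ for $d=1$ is indeed where the restriction $\alpha<3/2$ enters, and you are correct that $\varphi(T)=T\log T$ would fail for $d=2$, $\alpha=2$ --- though the sharper bound $T\log\log T$ you need there is exactly what \cite[Theorem 2.8]{bernardin2016occupation} already provides, so no ``sharper form of the comparison'' is required.

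The genuine gap is the key inequality $\sup_{z}\E_{\nu_\rho^\star}[(R^N_z)^2]\le C\varphi(t\gamma(N))$ itself: this is the technical heart of the paper, and the mechanism you offer for it --- that the ballistic drift of the tagged particle transports density fluctuations away and thereby keeps the environment occupation times below the exclusion ones --- is not how the comparison works and would not yield a proof. The actual argument discards the drift rather than exploiting it: one first applies the resolvent bound $\E[\sup_{t'\le t}(\int_0^{t'}(\rho-\xi_s(z))\,ds)^2]\le 10t\,\langle\xi(z)-\rho,(1/t-\gen)^{-1}(\xi(z)-\rho)\rangle_{\nu_\rho^\star}$ of \cite[Lemma 3.9]{sethuraman2000central}, then uses the variational formula for $(\lambda-\gen)^{-1}$ to replace $\gen$ by its symmetric part $\gens$, the antisymmetric contribution $\langle\gena g,(\lambda-\gens)^{-1}\gena g\rangle_{\nu_\rho^\star}$ being nonnegative and simply dropped. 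What remains is still nontrivial: $\gens$ contains the tagged-particle shift part $\gens^t$, and Lemma \ref{lem-a1} must control its Dirichlet form by that of the exchange part $\gens^e$ on degree-one functions (moving particles one at a time and using $s(z)=2^{d+\alpha}s(2z)$), after which Lemma \ref{lem-a2} extends the dynamics from $\Z^d_\star$ to $\Z^d$ so that the resulting $H_{-1}$ norm can be identified with the occupation-time variance of the symmetric exclusion process treated in \cite{bernardin2016occupation}. Without this chain your central estimate is unproven. A secondary discrepancy: for $d\ge2$, $1<\alpha<2$ the paper's main proof of \eqref{eqn:3} is a different argument --- a Kipnis--Varadhan bound combined with a Green's-function (transience) estimate applied to the whole sum $\sum_z(z\cdot a)p(z)(\rho-\xi(z))$ at once --- and the per-$z$ route you propose is only indicated as an alternative in the remark following the proof of Lemma \ref{lem:vN}.
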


\begin{remark}
	By the above proposition, the convergence in  Theorem \ref{thm:invariant} holds for any fixed time $t$. The convergence in the sense of finite dimensional distributions follows from the observations in \cite[Remark 3.3]{jara2009nonequilibrium}. Indeed, by exploiting the  martingale $\mathcal{M}^N_{\cdot+s} (\beta) / \mathcal{M}^N_{s} (\beta)$ for any $s \geq 0$ and by following the  proof of the above proposition  line by line, one could prove that the increments of the tagged particle process are conditionally independent and identically distributed, which is sufficient to prove the convergence in the sense of finite dimensional distributions.
\end{remark}

In the rest of this section, we prove Proposition \ref{pro:charac} in different regimes of parameters $\alpha$ and $d$.

\subsection{The case $0 < \alpha < 1$.}  In this case, the jump rate $p(\cdot)$ has heavy tails and the proof is very similar to the symmetric case \cite{jara2009nonequilibrium}. Recall $\overline{X}^N_t = X_{tN^\alpha}$ if $0 < \alpha < 1$. Take $\gamma (N) = N^\alpha$ in \eqref{mn}, then we have
	\[\mc{M}_{t}^N (\beta) = \exp\Big\{ i \beta (\overline{X}^N_t \cdot a)/N - \frac{1}{N^\alpha} \int_0^{tN^\alpha} \frac{1}{N^d} \sum_{z \in \Z^d_\star} p \big(\tfrac{z}{N}\big)(e^{i\beta (z \cdot a) /N} - 1) (1-\xi_s (z)) ds \Big\}.\]
Therefore,
\begin{multline*}
	\Big|  \E_{\nu_\rho^\star} \Big[ \exp \{ i \beta (\overline{X}^N_t \cdot a) /N\} \Big]  -  \exp \big\{  t(1-\rho) \Phi_{\alpha,a} (\beta) \big\} \Big| 
	\\ \leq \E_{\nu_\rho^\star} \Big[ \Big| 1 -   \exp \Big\{  t(1-\rho) \Phi_{\alpha,a} (\beta)-   \frac{1}{N^\alpha} \int_0^{tN^\alpha} \frac{1}{N^d}  \sum_{z \in \Z^d_\star} p \big(\tfrac{z}{N}\big) (e^{i\beta (z \cdot a) /N} - 1) (1-\xi_s (z)) ds \Big\}  \Big| \Big].
\end{multline*}
Since there exists a finite constant $C$ independent of $N$ such that
\begin{equation}\label{realPart}
\bigg|\mathfrak{R} \Big( \frac{1}{N^d}  \sum_{z \in \Z^d_\star} p \big(\tfrac{z}{N}\big)(e^{i\beta (z \cdot a) /N} - 1) (1-\xi_s (z)) \Big) \bigg| \\
\leq C \Big(1+\int_{\tfrac{1}{N}}^1 r^{1-\alpha} dr \Big) \leq C,
\end{equation}
by  dominated convergence theorem,  it suffices to show that 
\begin{equation}\label{vn_esub}
	\lim_{N \rightarrow \infty} \frac{1}{N^\alpha} \int_0^{tN^\alpha}  \frac{1}{N^d}  \sum_{z \in \Z^d_\star} p \big(\tfrac{z}{N}\big) (e^{i\beta (z \cdot a) /N} - 1) (1-\xi_s (z)) ds 
	= t(1-\rho) \Phi_{\alpha,a} (\beta)
\end{equation}
in $\P_{\nu_\rho^\star}$--probability.  Observe that 
\[\lim_{N \rightarrow \infty} \E_{\nu_\rho^\star} \Big[ \frac{1}{N^\alpha} \int_0^{tN^\alpha}  \frac{1}{N^d}  \sum_{z \in \Z^d_\star} p \big(\tfrac{z}{N}\big) (e^{i\beta (z \cdot a) /N} - 1) (1-\xi_s (z)) ds \Big]
= t(1-\rho) \Phi_{\alpha,a} (\beta),\]
and by Cauchy-Schwarz inequality, that
\begin{multline*}
{\rm Var}_{\nu_\rho^\star} \Big( \frac{1}{N^\alpha} \int_0^{tN^\alpha}  \frac{1}{N^d}  \sum_{z \in \Z^d_\star} p \big(\tfrac{z}{N}\big) (e^{i\beta (z \cdot a) /N} - 1) (1-\xi_s (z)) ds \Big) \\ 
\leq  \frac{Ct^2}{N^d} \Big( \int_{\tfrac{1}{N}}^1 \frac{r^2}{r^{2d+2\alpha}} r^{d-1} dr + 1 \Big) \leq Ct^2 \big( N^{2\alpha - 2} + N^{-d}\big),
\end{multline*}
which converges to zero as $N \rightarrow \infty$ since $\alpha < 1$. This proves \eqref{vn_esub} and thus concludes the proof for the case $0 < \alpha < 1$.

\subsection{The case $d \geq 2$ and $1 < \alpha < 2$.}  
Recall $\overline{X}_{t}^N =X_{tN^\alpha} - tN^\alpha (1-\rho) m$ in this case. Take $\gamma (N) = N^\alpha$ in \eqref{mn}, then we may rewrite the martingale $\mathcal{M}^N_t$ defined in \eqref{mn} as 
	\[\mc{M}_{t}^N (\beta) = \exp\Big\{i \beta (\overline{X}_{t}^N \cdot a)  /N -\frac{1}{N^\alpha} \int_0^{tN^\alpha} \Gamma^{N}_a (\xi_s)\,ds \Big\},\]
	where for $a \in \R^d$ and $\xi \in \Omega_\star^d$,
	\begin{equation}\label{GammaN}
		\Gamma^N_a (\xi)= \frac{1}{N^d} \sum_{z \in \Z_\star^d} p \big(\tfrac{z}{N}\big)(e^{i\beta (z \cdot a) /N} - 1) (1-\xi (z)) - i \beta  N^{\alpha -1} (1-\rho) (m \cdot a).
	\end{equation}
Therefore,
	\begin{multline*}
		\Big| \E_{\nu_\rho^\star} \Big[ \exp \Big\{i \beta (\overline{X}_{t}^N \cdot a)  /N\Big\} \Big]  - \exp \{t(1-\rho) \Phi_{\alpha,a} (\beta)\} \Big| \\
		\leq \E_{\nu_\rho^\star}  \Big[  \Big| 1 -  \exp\Big \{t(1-\rho) \Phi_{\alpha,a} (\beta) - \frac{1}{N^\alpha} \int_0^{tN^\alpha} \Gamma^{N}_a (\xi_s)\,ds  \Big\} \Big| \Big].
	\end{multline*}
As in \eqref{realPart}, if $1 < \alpha < 2$, there exists a constant $C$ independent of $N$ such that
\[\Big|\mathfrak{R} \big( 	\Gamma^N_a (\xi) \big) \Big| \\
\leq C \Big(1+\int_{\tfrac{1}{N}}^1 r^{1-\alpha} dr \Big) \leq C.\]
In Lemma  \ref{lem:vN} below, we shall prove that
\[ \lim_{N \rightarrow \infty}  \frac{1}{N^\alpha} \int_0^{tN^\alpha} \Gamma^N_a (\xi_s)\,ds = t (1- \rho) \Phi_{\alpha,a} (\beta)\]
in $\P_{\nu_\rho^\star}$--probability. Then we finish the proof of Proposition \ref{pro:charac} in the case $d \geq 2$ and $1 < \alpha < 2$ by dominated convergence theorem. 

The rest of this subsection is to prove the following crucial lemma.

\begin{lemma}\label{lem:vN}  
 Suppose $d \geq 2$ and $1 < \alpha < 2$. Then, for any $a \in \R^d$, any $t > 0$, and any $\beta \in \R$, 
 \begin{equation}\label{vN_above1}
 \lim_{N \rightarrow \infty}  \frac{1}{N^\alpha} \int_0^{tN^\alpha} \Gamma^N_a (\xi_s)\,ds = t (1- \rho) \Phi_{\alpha,a} (\beta)
 \end{equation}
in $\P_{\nu_\rho^\star}$--probability, where  $\Gamma^N_a$  is defined in \eqref{GammaN}.
\end{lemma}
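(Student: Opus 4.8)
The plan is to prove \eqref{vN_above1} via Chebyshev's inequality, by showing that $Z_N:=\frac{1}{N^\alpha}\int_0^{tN^\alpha}\Gamma^N_a(\xi_s)\,ds$ has mean converging to $t(1-\rho)\Phi_{\alpha,a}(\beta)$ and variance converging to $0$; throughout, $\|\cdot\|_2$ denotes the $L^2(\P_{\nu_\rho^\star})$-norm. For the mean, invariance of $\nu_\rho^\star$ gives $\E_{\nu_\rho^\star}[Z_N]=t\,\E_{\nu_\rho^\star}[\Gamma^N_a]$, and since $\E_{\nu_\rho^\star}[1-\xi(z)]=1-\rho$ for every $z\in\Z^d_\star$, while $i\beta N^{\alpha-1}(m\cdot a)=\frac{1}{N^d}\sum_{z\in\Z^d_\star}p(z/N)\,\frac{i\beta(z\cdot a)}{N}$ (which follows from $m=\sum_z z p(z)$ and $p(z/N)=N^{d+\alpha}p(z)$), one obtains
\[
\E_{\nu_\rho^\star}[\Gamma^N_a]=(1-\rho)\,\frac{1}{N^d}\sum_{z\in\Z^d_\star}p\big(\tfrac{z}{N}\big)\Big(e^{i\beta(z\cdot a)/N}-1-\tfrac{i\beta(z\cdot a)}{N}\Big).
\]
This is a Riemann sum which, by splitting at $\|z\|=N$ and using $|e^{i\theta}-1-i\theta|\le\min\{\theta^2/2,\,2+|\theta|\}$, converges by dominated convergence to $(1-\rho)\int_{\R^d}p(u)\big(e^{i\beta(u\cdot a)}-1-i\beta(u\cdot a)\big)\,du=(1-\rho)\Phi_{\alpha,a}(\beta)$; here $\alpha>1$ is exactly what makes $m$ (hence the subtraction) and the integral finite, and $\alpha<2$ is what makes the dominating function $\|u\|^{2-d-\alpha}\wedge\|u\|^{1-d-\alpha}$ integrable near the origin.

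For the variance, the deterministic part of $\Gamma^N_a$ cancels upon centering, so $\Gamma^N_a(\xi)-\E_{\nu_\rho^\star}[\Gamma^N_a]=\sum_z c^N_z(\rho-\xi(z))$ with $c^N_z:=N^\alpha p(z)\big(e^{i\beta(z\cdot a)/N}-1\big)$, and by Minkowski's inequality
\[
\vari_{\nu_\rho^\star}(Z_N)=\frac{1}{N^{2\alpha}}\Big\|\int_0^{tN^\alpha}\!\!\sum_z c^N_z(\rho-\xi_s(z))\,ds\Big\|_2^2\le\frac{1}{N^{2\alpha}}\Big(\sum_z|c^N_z|\,\Big\|\int_0^{tN^\alpha}\!\!(\rho-\xi_s(z))\,ds\Big\|_2\Big)^2.
\]
Using $|e^{i\theta}-1|\le\min\{2,|\theta|\}$ one checks $\sum_z|c^N_z|\le CN^\alpha\sum_z p(z)\min\{2,C|z|/N\}\le CN^{\alpha-1}$, the geometric sums converging precisely because $\alpha>1$. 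Thus everything reduces to bounding the occupation-time quantity $\big\|\int_0^{T}(\rho-\xi_s(z))\,ds\big\|_2^2$ uniformly in $z\in\Z^d_\star$, and this is the one genuinely delicate point.

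To handle it I would compare the environment process with the exclusion process, as announced in the introduction. Writing $\gen=\gen_{\mathrm{ex}}+\gen_{\mathrm{sh}}$ for the exclusion and shift parts of the generator, the Dirichlet form of $\gen$ dominates that of $\gen_{\mathrm{ex}}$ (the shift part contributes a non-negative Dirichlet form), so the $H_{-1}$ seminorm of $\xi\mapsto\rho-\xi(z)$ relative to $\gen$ is at most its $H_{-1}$ seminorm relative to $\gen_{\mathrm{ex}}$, which depends only on the symmetric part of $\gen_{\mathrm{ex}}$, namely the symmetric long-jump exclusion with rates $\|z\|^{-d-\alpha}$. By \cite{bernardin2016occupation} this last quantity is finite, and bounded uniformly in $z$, as soon as $\alpha<d$; this holds throughout the regime $d\ge2,\ 1<\alpha<2$, since then $\alpha<2\le d$. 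Feeding this into the standard variance bound for additive functionals of a stationary Markov process gives $\big\|\int_0^T(\rho-\xi_s(z))\,ds\big\|_2^2\le CT$ uniformly in $z$, whence
\[
\vari_{\nu_\rho^\star}(Z_N)\le\frac{1}{N^{2\alpha}}\,(CN^{\alpha-1})^2\cdot CtN^\alpha=Ct\,N^{\alpha-2}\longrightarrow 0,\qquad N\to\infty,
\]
using $\alpha<2$. Together with the convergence of the mean and Chebyshev's inequality, this proves \eqref{vN_above1}. The main obstacle is exactly the uniform-in-$z$ occupation-time estimate for the environment process; the other hypotheses enter transparently — $\alpha>1$ for finiteness of $m$ and of the geometric sums, $\alpha<2$ for integrability at the origin and for $N^{\alpha-2}\to0$, and $d\ge2$ (hence $\alpha<d$) so that occupation times of the long-jump exclusion grow only linearly in time.
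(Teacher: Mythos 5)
Your overall strategy is sound and it is genuinely different from the route the paper actually takes for this lemma, although it coincides with the alternative the paper sketches in the remark immediately following its proof. The paper splits $\Gamma^N_a$ into the second-order remainder $e^{i\theta}-1-i\theta$ (whose variance is killed by a crude stationarity bound) plus the linear term $i\beta\sum_z(z\cdot a)p(z)N^{-1}\int_0^{tN^\alpha}(\rho-\xi_s(z))\,ds$, and then estimates the $H_{-1}$ norm of this \emph{whole} linear functional at once via the Kipnis--Varadhan inequality together with the transience of the symmetrized walk (Lemma \ref{lem:rw}) and a Green's-function argument in the spirit of Sethuraman--Varadhan--Yau. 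You instead keep the first-order remainder, apply the triangle inequality in $L^2$ to reduce everything to a single-site occupation-time bound $\sup_z\E_{\nu_\rho^\star}[(\int_0^T(\rho-\xi_s(z))ds)^2]\le CT$, and win because $\sum_z|c^N_z|\le CN^{\alpha-1}$. Your Riemann-sum computation of the mean and the arithmetic $N^{-2\alpha}(N^{\alpha-1})^2\cdot N^\alpha=N^{\alpha-2}$ are correct, and the single-site route has the advantage of being reusable verbatim in the cases $\alpha=1$ and $\alpha=2$ (which is exactly what the paper does there); the paper's Green's-function route gives the sharper bound $CN^{\alpha-2}$ for the supremum of the linear term directly, which it also needs for tightness.

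There is, however, one genuine gap in your justification of the occupation-time estimate. After dropping the tagged-particle (shift) part of the Dirichlet form — which is legitimate, since it is non-negative and only decreases the $H_{-1}$ norm — you are left with the symmetric exclusion on $\Z^d_\star$, i.e.\ on $\Z^d$ with the origin \emph{removed}, not the translation-invariant symmetric long-jump exclusion on $\Z^d$ to which the occupation-time bounds of Bernardin--Gon\c{c}alves--Sethuraman apply. Deleting the site $0$ deletes bonds, hence \emph{decreases} the Dirichlet form and \emph{increases} the $H_{-1}$ norm, so the comparison you need is not in the free direction and cannot be obtained by simply "dominating Dirichlet forms." Closing this requires an extension/comparison argument for degree-one functions — this is precisely the content of the paper's Lemmas \ref{lem-a1} and \ref{lem-a2}, which show $\|\xi(z)-\rho\|_{-1,\lambda,\gens}\le C\|\chi_{\{z\}}\|_{-1,\lambda,\mf{S}_{\rm ext}}$ with $\mf{S}_{\rm ext}$ the operator on all of $\Z^d$ — before \cite[Theorem~2.8]{bernardin2016occupation} (linear growth of the occupation-time variance when $\alpha<d$, which indeed covers all of $d\ge2$, $1<\alpha<2$) can be invoked. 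With that lemma supplied, your argument goes through.
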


Before proving the above result, we first state a lemma concerning about the recurrence or transience of random walks with long jumps.

\begin{lemma}\label{lem:rw}
	Let $\alpha > 0$.  Let $\{Z_n\}_{n \geq 1}$ be the symmetric random walk on $\Z^d$ with transition probability 
	\[s (x) = \frac{1}{s^\star} \frac{1}{\|x\|^{d+\alpha}},\quad x \neq 0,\]
	where $s^\star$ is the normalizing constant
	\[s^\star = \sum_{x \in \Z^d,\atop x \neq 0} \frac{1}{\|x\|^{d+\alpha}}.\]
	Then, $\{Z_n\}$ is recurrent if and only if  $d \leq 2$ and  $\alpha \geq d$. 
\end{lemma}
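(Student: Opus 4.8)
The plan is to use the classical Chung--Fuchs recurrence criterion in Fourier form. Set $\widehat{s}(\theta)=\sum_{x\in\Z^d}s(x)e^{i\theta\cdot x}$ for $\theta\in[-\pi,\pi]^d$; since $s$ is symmetric, $\widehat{s}$ is real and $|\widehat{s}|\le1$, and since $s$ is supported on all of $\Z^d_\star$ the walk is aperiodic, so $\widehat{s}(\theta)=1$ only at $\theta=0$ in $[-\pi,\pi]^d$. In its symmetric form the criterion asserts that $\{Z_n\}$ is recurrent if and only if
\[
\int_{(-\pi,\pi)^d}\frac{d\theta}{1-\widehat{s}(\theta)}=+\infty ;
\]
since $1-\widehat{s}$ is continuous and bounded below by a positive constant away from the origin, the value of this integral depends only on the order of vanishing of $1-\widehat{s}(\theta)$ as $\theta\to0$.

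Accordingly, the heart of the argument is a two-sided bound: there are constants $0<c\le C<\infty$ with
\[
c\,\psi_\alpha(\theta)\ \le\ 1-\widehat{s}(\theta)\ \le\ C\,\psi_\alpha(\theta)\qquad(\theta\to0),\qquad
\psi_\alpha(\theta)=\begin{cases}\|\theta\|^\alpha,&0<\alpha<2,\\[2pt]\|\theta\|^2\log(1/\|\theta\|),&\alpha=2,\\[2pt]\|\theta\|^2,&\alpha>2.\end{cases}
\]
To obtain it I would write $1-\widehat{s}(\theta)=\tfrac{1}{s^\star}\sum_{x\neq0}\bigl(1-\cos(\theta\cdot x)\bigr)\|x\|^{-d-\alpha}$ and split the sum at $\|x\|=1/\|\theta\|$. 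The tail $\{\|x\|>1/\|\theta\|\}$, using $1-\cos\le2$, contributes a quantity comparable to $\int_{1/\|\theta\|}^\infty r^{-1-\alpha}\,dr\asymp\|\theta\|^\alpha$. On $\{\|x\|\le1/\|\theta\|\}$ one has $|\theta\cdot x|\le1$, hence $\tfrac12(\theta\cdot x)^2\ge1-\cos(\theta\cdot x)\ge c_0(\theta\cdot x)^2$; exploiting the invariance of $\|x\|^{-d-\alpha}$ under sign changes and coordinate permutations gives the exact identity $\sum_{1\le\|x\|\le R}(\theta\cdot x)^2\|x\|^{-d-\alpha}=\tfrac{\|\theta\|^2}{d}\sum_{1\le\|x\|\le R}\|x\|^{2-d-\alpha}$, and the right side is comparable to $\|\theta\|^2\int_1^{1/\|\theta\|}r^{1-\alpha}\,dr$. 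Combining the two pieces, and observing that $\|\theta\|^2\int_1^{1/\|\theta\|}r^{1-\alpha}\,dr$ behaves like $\|\theta\|^\alpha$ for $\alpha<2$, like $\|\theta\|^2\log(1/\|\theta\|)$ for $\alpha=2$, and like $\|\theta\|^2$ for $\alpha>2$, yields the upper bound; the lower bound comes from the small-$x$ part alone, all of whose terms are nonnegative.

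With this two-sided bound, polar coordinates reduce the problem to $\int_0^\delta r^{d-1}\psi_\alpha(r)^{-1}\,dr$, and the dichotomy is read off by inspection. For $0<\alpha<2$ this is $\int_0^\delta r^{d-1-\alpha}\,dr$, divergent iff $\alpha\ge d$, which under $\alpha<2$ forces $d=1$, $1\le\alpha<2$. For $\alpha=2$ it is $\int_0^\delta r^{d-3}(\log(1/r))^{-1}\,dr$, divergent iff $d\le2$ (the critical case $d=2$ handled by the substitution $u=\log(1/r)$). For $\alpha>2$ it is $\int_0^\delta r^{d-3}\,dr$, divergent iff $d\le2$. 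Thus in every regime recurrence holds precisely when $d\le2$ and $\alpha\ge d$, which is the assertion.

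The step I expect to be most delicate is the lower bound in the two-sided estimate for $1-\widehat{s}(\theta)$, and in particular the emergence of the logarithmic correction at the borderline exponent $\alpha=2$ --- this is exactly what places $d=2$, $\alpha=2$ on the recurrent side. The Chung--Fuchs reduction, the upper bound, and the final one-dimensional integral tests are all routine.
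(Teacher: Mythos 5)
Your proof is correct, and while it shares the paper's starting point --- the Chung--Fuchs recurrence criterion in Fourier form --- the way you control $1-\widehat{s}(\theta)$ near the origin is genuinely different. The paper establishes the \emph{exact} asymptotics $1-\varphi(rv)\sim r^{\alpha}\int f_v\,du$ (with the appropriate $r^2|\log r|$ normalization at $\alpha=2$) by viewing the lattice sum as a Riemann sum and estimating the error through pointwise bounds on $\partial_{u_j}f_v$; moreover it abandons the Fourier route entirely in the regimes $d=1,\ \alpha>1$ and $d=2,\ \alpha>2$, where it instead invokes the law of large numbers (resp.\ the CLT) together with the Chung--Fuchs theorem. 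You instead prove only a two-sided bound $1-\widehat{s}(\theta)\asymp\psi_\alpha(\theta)$, obtained by splitting the sum at $\|x\|=1/\|\theta\|$, using $1-\cos t\asymp t^2$ for $|t|\le 1$ on the near range together with the symmetrization identity $\sum(\theta\cdot x)^2\|x\|^{-d-\alpha}=\tfrac{\|\theta\|^2}{d}\sum\|x\|^{2-d-\alpha}$, and $1-\cos\le 2$ on the tail. Since only the divergence or convergence of $\int_0^\delta r^{d-1}\psi_\alpha(r)^{-1}\,dr$ matters, this suffices, and it treats all values of $\alpha$ and $d$ uniformly --- including $\alpha>2$, $d\ge 3$, and the logarithmic borderline $\alpha=2$, which falls out of $\sum_{1\le\|x\|\le 1/\|\theta\|}\|x\|^{-d}\asymp\log(1/\|\theta\|)$ rather than requiring a separate CLT argument. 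The price is that you obtain no limiting constant, but none is needed for the lemma. In a full write-up you should make explicit (i) that the symmetric form of the criterion you quote is legitimate because $\widehat{s}$ is real and $1-\widehat{s}$ is bounded away from zero off the origin (the support of $s$ generates $\Z^d$, so $\widehat{s}(\theta)=1$ only at $\theta=0$ in $(-\pi,\pi]^d$), and (ii) the Cauchy--Schwarz step $|\theta\cdot x|\le\|\theta\|\,\|x\|\le 1$ that justifies $1-\cos(\theta\cdot x)\asymp(\theta\cdot x)^2$ on the near range; both are routine.
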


The above results were proved very recently in \cite{baumler2022recurrence} by using the equivalence between the transience of random walks and the existence of a unit flow with finite energy from the origin to infinity. Below we present a more elementary proof by analyzing the characteristic function of the random walk $\{Z_n\}_{n \geq 1}$. 

\begin{proof}
	It is well known that the random walk is transient if $d \geq 3$.  Thus it remains to consider the case $d \leq 2$. It is well known that  the random walk is recurrent if and only if for some fixed $\delta > 0$ (see \cite{durrett2017probability} for example),
	\begin{equation}\label{recurrence}
		\int_{(-\delta,\delta)^d} \mathfrak{R} \Big( \frac{1}{1 - \varphi (\theta)}  \Big) d \theta = \infty,
	\end{equation}
	where $\varphi (\theta)$, $\theta \in \R^d$, is the characteristic function of the random walk $\{Z_n\}$,
	\[\varphi(\theta) = \sum_{x \in \Z^d} e^{i (\theta \cdot x)} s (x) = \sum_{x \in \Z^d} \cos (\theta \cdot x) s (x).\]

We first consider  the simpler case $d=1$. If $\alpha > 1$, then the mean of the transition kernel $s(x)$ is zero and by  law of large numbers,
\[\lim_{n \rightarrow \infty} \frac{1}{n} \sum_{j=1}^n Z_j = 0\]
in probability. By Chung-Fuchs theorem (see \cite{durrett2017probability} for example), the random walk is recurrent in this case. If  $0 < \alpha \leq 1$, observe that
	\[
	\lim_{\theta \rightarrow 0} \frac{1 - \varphi (\theta)}{|\theta|^\alpha} = 	\lim_{\theta \rightarrow 0} \frac{|\theta| }{s^\star}\sum_{x \in \Z} \frac{1 - \cos (\theta  x)}{|\theta x|^{1+\alpha}}
	= \frac{1}{s^\star} \int_\R \frac{1-\cos u}{|u|^{1+\alpha}} du.
	\]
Since the term on the right side in the last line is finite and  positive, there exists a constant $C > 1$ such that
\[	C^{-1}	\int_{(-\delta,\delta)} |\theta|^{-\alpha}  d \theta \leq \int_{(-\delta,\delta)}  \frac{1}{1 - \varphi (\theta)}  d \theta \leq C	\int_{(-\delta,\delta)} |\theta|^{-\alpha}  d \theta.\]
Since the integral  $\int_{(-\delta,\delta)} |\theta|^{-\alpha}  d \theta$ is finite if and only if $0 < \alpha < 1$, the random walk is transient if and only if $0 < \alpha < 1$ in the case $d=1$.
	
We now consider  the case $d=2$, whose proof is similar to the one dimensional case but is more involved. If $\alpha > 2$, then the transition kernel $s(x)$ has finite second moment and thus the central limit theorem holds for $n^{-1/2} \sum_{j=1}^n Z_j$. 
By \cite{durrett2017probability}, the random walk is recurrent in this case.  Now, assume $0 < \alpha < 2$. Fix $v \in \R^2$ such that $\|v\| = 1$.  Define $f_v: \R^2 \rightarrow \R$ as
\[f_v (u) = \frac{1}{s^\star} \frac{1-\cos (v \cdot u)}{\|u\|^{2+\alpha}}, \quad u \in \R^2.\]
Observe that $ \int_{\R^2} f_v (u) du $ is integrable and is independent of $v$. We claim that
\begin{equation}\label{rw0}
\lim_{r \rightarrow 0} \sup_{\|v\|=1} \Big|\frac{1 - \varphi (r v)}{r^\alpha} - \int_{\R^2} f_v (u) du \Big| =0.
\end{equation}

 Indeed, for any $0< r <1$, we have
\begin{multline}\label{rw1}
	\Big|\frac{1 - \varphi (r v)}{r^\alpha} - \int_{\R^2} f_v (u) du \Big| = 	\Big|r^2 \sum_{x \in \Z^2_\star} f_v(rx) - \int_{\R^2} f_v (u) du \Big| \\
	\leq  \sum_{x \in \Z^2_\star} \int_{|u-rx|<r/2} \big| f_v(rx) -  f_v (u) \big| du   + \int_{|u| < r/2} f_v (u) du\\
	\leq  C r  \sum_{x \in \Z^2_\star} \sum_{j=1}^2 \int_{|u-rx|<r/2} \big| \partial_{u_j} f_v(w_{rx,u}) \big| du + C r^{2-\alpha},
\end{multline} 
where $w_{rx,u}$ is some point between $rx$ and $u$, and the constant $C$ above is independent of $v$ and $r$.  Direct calculations yield that for $j=1,2$,
\[\partial_{u_j} f_v (u)= \frac{1}{s^\star} \frac{v_j \sin (v \cdot u)}{\|u\|^{2+\alpha}} - \frac{2+\alpha}{s^\star} \frac{u_j (1-\cos (v \cdot u))}{\|u\|^{4+\alpha}}.\]
Thus, there exists some constant $C$ independent of $v$ such that
\[\big| \partial_{u_j} f_v (u) \big| \leq C \big[ \min \{\|u\|^{-2-\alpha}, \|u\|^{-1-\alpha}\} +  \min \{\|u\|^{-3-\alpha}, \|u\|^{-1-\alpha}\} \big]. \]
Then, we may bound the first term on the right side of \eqref{rw1} by
\begin{equation*}
C r  \Big\{ \sum_{\|rx\|>1}  \int_{|u-rx|<r/2} \big(\|w_{rx,u}\|^{-2-\alpha} + \|w_{rx,u}\|^{-3-\alpha}\big) du
+  \sum_{r \leq \|rx\|\leq 1}  \int_{|u-rx|<r/2} \|w_{rx,u}\|^{-1-\alpha}  du \Big\}.
\end{equation*}
Since $\|w_{rx,u}\|  \geq |w_{rx,u}| \geq |rx| - r/2 \geq |rx|/2 \geq \|rx\|/ (2 \sqrt{2})$, the last line is bounded by
\begin{multline*}
Cr^3  \Big\{  \sum_{\|rx\|>1}  \big(\|rx\|^{-2-\alpha} + \|rx\|^{-3-\alpha}\big) 
+  \sum_{r \leq \|rx\|\leq 1}   \|rx\|^{-1-\alpha}  \Big\} \\
\leq C r \Big\{ \int_1^\infty (\ell^{-1-\alpha} + \ell^{-2-\alpha}) d \ell + \int_r^1 \ell^{-\alpha} d \ell \Big\}.
\end{multline*}
Note that the term on the right side in the last inequality is bounded by $Cr$ if $0 < \alpha < 1$, by $C (r- r \log r)$ if $\alpha = 1$ and by $C (r+r^{2-\alpha})$ if $1 < \alpha < 2$. Since the constant $C$ above is independent of $v$, we conclude the proof of \eqref{rw0}.

By \eqref{rw0}, there exists a constant $C > 1$ such that for any $\theta \in \R^2$ with norm small enough, 
\[C^{-1} \|\theta\|^{-\alpha} \leq 1 -\varphi (\theta) \leq C \|\theta\|^{-\alpha}.\]
Then by \eqref{recurrence}, the random walk is transient if $0 < \alpha < 2$. 

Similarly, for $\alpha = 2$,  one could show that
\[\lim_{r \rightarrow 0} \sup_{\|v\|=1} \Big|\frac{1 - \varphi (r v)}{r^2 |\log r|  } - \frac{1}{|\log r|}\int_{\|u\|>r} f_v (u) du \Big| =0.\]
Therefore,  there exists a constant $C > 1$ such that for any $\theta \in \R^2$ with norm small enough,
\[- C^{-1} \|\theta\|^{-2}  \log \|\theta\| \leq 1 -\varphi (\theta) \leq -C \|\theta\|^{-2}  \log \|\theta\|.\]
Then by \eqref{recurrence}, it is easy to see  the random walk is recurrent if $\alpha = 2$. This concludes the proof.
\end{proof}

\begin{proof}[Proof of Lemma \ref{lem:vN}] Since $p(\tfrac{z}{N}) = N^{d+\alpha} p(z)$ for any $z \in \Z^d_\star$, we could rewrite the term on the left side of \eqref{vN_above1} as
\begin{align}
\frac{1}{N^\alpha} \int_0^{tN^\alpha} \frac{1}{N^d} \sum_{z \in \Z^d\star} p \big(\tfrac{z}{N}\big) (e^{i \beta (z \cdot a) /N} &- 1 - i \beta (z \cdot a) /N) (1 - \xi_s (z)) ds\label{eqn:1}\\
&+ i \beta \sum_{z \in \Z^d_\star} (z \cdot a)  p(z) \frac{1}{N} \int_0^{tN^\alpha} (\rho - \xi_s (z))\,ds.\label{eqn:2}
\end{align}
For \eqref{eqn:1}, it is easy to see that 
\[\lim_{N \rightarrow \infty} \E_{\nu_\rho^\star} \Big[ \frac{1}{N^\alpha} \int_0^{tN^\alpha} \frac{1}{N^d} \sum_{z \in \Z^d_\star} p \big(\tfrac{z}{N}\big) (e^{i \beta (z \cdot a) /N} - 1 - i \beta (z \cdot a) /N) (1 - \xi_s (z)) ds \Big] = t (1-\rho) \Phi_{\alpha,a} (\beta). \]
 Now we prove 
\begin{equation}\label{var1}
\lim_{N \rightarrow \infty} {\rm Var}_{\nu_\rho^\star} \Big( \frac{1}{N^\alpha} \int_0^{tN^\alpha} \frac{1}{N^d} \sum_{z \in \Z^d_\star} p \big(\tfrac{z}{N}\big) (e^{i \beta (z \cdot a) /N} - 1 - i \beta (z \cdot a) /N) (1 - \xi_s (z)) ds \Big) = 0,
\end{equation}
which, by Chebyshev's inequality, implies that the term \eqref{eqn:1} converges  as $N \rightarrow \infty$  to $t (1-\rho) \Phi_{\alpha,a} (\beta)$ in $\P_{\nu_\rho^\star}$--probability. 
 For the real part, by Cauchy-Schwarz  inequality and stationary of the process $\{\xi_t\}_{t \geq 0}$, there exists some constant $C$ such that the variance  in \eqref{var1} is bounded by 
 \[
\frac{Ct^2}{N^{2d}}  \sum_{z \in \Z^d_\star} p \big(\tfrac{z}{N}\big)^2 \big[ \cos\big(\tfrac{\beta (z \cdot a)}{N}\big) - 1 \big]^2 
\leq \frac{Ct^2}{N^d} \Big( 1 + \int_{\tfrac{1}{N}}^1 r^{3-d-2\alpha} dr \Big) \leq C t^2 \big(N^{-d} + N^{2 \alpha - 4}\big).
\]
The last line converges to zero since $\alpha < 2$. Similarly, the variance of the imaginary part in \eqref{var1} is bounded by
\[\frac{Ct^2}{N^{2d}} \sum_{z \in \Z^d_\star} p \big(\tfrac{z}{N}\big)^2 \big[ \sin \big( \tfrac{\beta (z \cdot a)}{N} \big) - \tfrac{\beta (z \cdot a)}{N}\big]^2 
\leq \frac{Ct^2}{N^d} \Big( 1 + \int_{\tfrac 1 N}^1 r^{5-d-2\alpha} dr \Big) 
\leq Ct^2 \big( N^{-d} + N^{2 \alpha - 6} \big),\]
which also vanishes in the limit. This proves \eqref{var1}.  

\medspace

To finish the proof,  we only need to show \eqref{eqn:2} vanishes in the limit, precisely speaking,
\begin{equation}\label{eqn:3}
\lim_{N \rightarrow \infty} \sum_{z \in \Z^d_\star} (z \cdot a)  p(z) \frac{1}{N} \int_0^{tN^\alpha} (\rho - \xi_s (z))\,ds = 0
\end{equation}
in $\P_{\nu_\rho^\star}$-probability.   By Lemma \ref{lem:rw}, the random walk with transition kernel $s(\cdot)$ is transient if  $d \geq 2$ and $1 < \alpha < 2$, which permits us to exploit the \emph{transience estimates} from \cite{sethuraman2000diffusive}.  
By Kipnis-Varadhan inequality \cite[Proposition A1.6.1]{klscaling},
\begin{multline}\label{kv}
	\E_{\nu_\rho^\star} \Big[ \sup_{0 \leq t \leq T} \Big( \sum_{z \in \Z_\star^d} (z \cdot a)  p(z) \frac{1}{N} \int_0^{tN^\alpha} (\rho - \xi_s (z))\,ds \Big)^2  \Big] \\
	\leq 20 T N^{\alpha -2}  \sup_{f \in  L^2 (\Omega_\star^d,\nu^\star_\rho)} \Big\{  2 \int   \sum_{z \in \Z_\star^d} (z \cdot a)  p(z)  (\rho - \xi (z)) f (\xi)\,\nu_\rho^\star (d \xi) - \<f,(-\gen) f\>_{\nu_\rho^\star}\Big\}.
\end{multline}
Above, for a probability measure $\mu$ on $\Omega_\star^d$ and two functions $f,g \in L^2 (\Omega_\star^d,\mu)$, we write $\<f,g\>_\mu=E_\mu [fg]$.  We also remark that we need the supremum inside the above expectation in order to prove tightness in the next section. Recall $s(\cdot)$ is the normalized symmetric part of $p(\cdot)$,
\[s(z) = \frac{p(z)+p(-z)}{2 s^\star} =  \frac{1}{s^\star} \frac{1}{\|z\|^{d+\alpha}},  \quad z \in \Z^d_\star.\]
Since $\nu_\rho^\star$ is invariant for the generator $\gen$, 
\begin{multline*}
\<f,(-\gen) f\>_{\nu_\rho^\star} = \frac{s^\star}{2} \Big\{  \int  \sum_{x,y \in \Z^d_\star} s(y-x) [f(\xi^{x,y}) - f(\xi)]^2\,\nu_\rho^\star (d \xi)\\
+\int\sum_{z\in \Z^d_\star}  s(z) (1-\xi (z)) [f(\theta_z \xi) - f(\xi)]^2 \,\nu_\rho^\star (d \xi) \Big\}.
\end{multline*}
As in Appendix \ref{sec:tools}, for any finite subset $A \subset \Z^d_\star$, define 
\[\Psi_A (\xi) = \prod_{x \in A} \frac{\xi(x) - \rho}{\sqrt{\rho(1-\rho)}}\]
with the convention that $\Psi_{\emptyset} = 1$. 
Then $\{\Psi_A:\,A \subset \Z^d_\star\;\text{is finite}\}$ is an orthonormal basis of $L^2 (\Omega_\star^d,\nu_\rho^\star)$. This permits us to write any function $f \in L^2  (\Omega_\star^d,\nu_\rho^\star)$ as
\[f = \sum_{A \subset \Z^d_\star \atop A \; \text{finite}} \mf{f} (A) \Psi_A,\]
where $\mf{f} (A) = \int f(\xi) \Psi_A (\xi)  \nu_\rho^\star (d \xi)$ for finite subset $A \subset \Z_\star^d$. One could check directly that  $\Psi_A (\xi^{x,y}) = \Psi_{A\backslash \{x\} \cup \{y\}} (\xi)$ if $x \in A,\,y \notin A$. Therefore,
\begin{equation}\label{dirichletform}
\<f,(-\gen) f\>_{\nu_\rho^\star} \geq s^\star \sum_{A \subset \Z^d_\star \atop A \; \text{finite}}  \sum_{x \in A,\atop y \notin A} s(y-x) \big[\mf{f} (A \backslash \{x\} \cup \{y\}) - \mf{f} (A)\big]^2. 
\end{equation}

Now we deal with the first term inside the supremum in \eqref{kv}. By change of variables $\xi \mapsto \xi^z$,
\[\int  (\rho - \xi (z)) f (\xi)\,\nu_\rho^\star (d \xi)  = (1-\rho) \int \chi_{\{\xi(z)=1\}} \big( f(\xi^z) - f(\xi) \big) \,\nu_\rho^\star (d \xi),\]
where $\xi^z$ is the configuration obtained from $\xi$ by flipping the value of $\xi(z)$, \emph{i.e.}, $\xi^z (x) = \xi (x)$ for $x \neq z$ and $\xi^z (z) = 1- \xi (z)$. By Cauchy-Schwarz inequality, we may bound the first term inside the supremum in \eqref{kv} from above by
\begin{multline}\label{glauber}
	2 (1-\rho) \int \sum_{z \in \Z_\star^d} (z \cdot a)  p(z) \chi_{\{\xi(z)=1\}}  \big( f(\xi^z) - f(\xi) \big) \,\nu_\rho^\star (d \xi)\\
	\leq 2 (1-\rho) \Big(\sum_{z \in \Z_\star^d} |z \cdot a|  p(z)\Big)^{1/2} \Big( \int \sum_{z \in \Z_\star^d} |z \cdot a|  p(z) \big( f(\xi^z) - f(\xi) \big)^2  \,\nu_\rho^\star (d \xi)\Big)^{1/2}\\
\leq  C \Big( \int \sum_{z \in \Z_\star^d} |z \cdot a|  p(z) \big( f(\xi^z) - f(\xi) \big)^2  \,\nu_\rho^\star (d \xi)\Big)^{1/2}
\end{multline}
for some constant $C = C(\rho,\alpha,a)$. Since
\[\Psi_A (\xi^z) - \Psi_A (\xi) = \begin{cases}
	0, \quad&z \notin A,\\
	\frac{1-2\xi(z)}{\sqrt{\rho(1-\rho)}}\Psi_{A \backslash z} (\xi), \quad&z \in A,
\end{cases}\]
direct calculations show that
\begin{multline*}
\int \big( f(\xi^z) - f(\xi) \big)^2 \,\nu_\rho^\star (d \xi) = \int  \Big( \sum_{A \subset \Z^d_\star \atop A \; \text{finite}} \mf{f} (A) [\Psi_A (\xi^z) - \Psi_A (\xi)] \Big)^2 \,\nu_\rho^\star (d \xi) \\
= \frac{1}{\rho(1-\rho)} \int  \Big( \sum_{A:z \in A} \mf{f} (A) \Psi_{A\backslash z} (\xi) \Big)^2 \,\nu_\rho^\star (d \xi) = \frac{1}{\rho(1-\rho)} \sum_{A:z \in A} \mf{f}^2 (A).
\end{multline*}
For $n \geq 0$, let 
\[F_n^2 (z) = \sum_{A:z \in A, \atop |A|=n} \mf{f}^2 (A).\]
Then we may rewrite the last line in \eqref{glauber} as 
\begin{equation}\label{glauber_1}
C \Big( \sum_{z \in \Z_\star^d} \sum_{n \geq 0} |z \cdot a|  p(z) F_n^2 (z) \Big)^{1/2}
\end{equation}
for some constant $C = C(\rho,\alpha,a)$.

Next we bound $F_n^2 (z)$. Denote by $S_t (x,y)$ the transition probability associated with the continuous time random walk with transition probability kernel $s(\cdot)$ and by $g(x,y)$ the corresponding Green's function,
\[g(x,y) = \int_0^\infty S_t (x,y)\,dt.\]
By Lemma \ref{lem:rw},  $g(x,x) < \infty$ if $d \geq 2$ and $1 < \alpha < 2$.  Moreover, it is easy to see
\begin{equation}\label{green}
	\sum_{y} s(x-y) [g(z,y) - g (z,x) ] = - \chi_{\{x=z\}}.
\end{equation}
Indeed, since 
\[\frac{d S_t (x,z)}{dt} = \sum_{y} s(x-y) \big[S_t (y,z) - S_t (x,z)\big],\]
integrating over time from zero to infinity, we obtain \eqref{green} by the symmetry of $S_t$. This permits us to rewrite $F_n^2 (z)$ as
\begin{multline*}
F_n^2 (z)  = g(z,z) \sum_{x} \frac{F_n^2 (x)}{g(z,x)} \chi_{\{x=z\}} = -  g(z,z) \sum_{x,y}  \frac{F_n^2 (x)}{g(z,x)}  s(y-x) [g(z,y) - g (z,x) ]  \\
= (1/2) g(z,z) \sum_{x,y} s(y-x) \Big[\frac{F_n^2 (y)}{g(z,y)}  -  \frac{F_n^2 (x)}{g(z,x)} \Big]  [g(z,y) - g (z,x) ].
\end{multline*}
Above, in the second identity we use \eqref{green} and in the last one we use the symmetry of $s(\cdot)$. By Cauchy-Schwarz inequality,
\begin{multline*}
\Big[\frac{F_n^2 (y)}{g(z,y)}  -  \frac{F_n^2 (x)}{g(z,x)} \Big]  [g(z,y) - g (z,x) ] = F_n^2 (y) + F_n^2 (x) - \Big[ \frac{g(z,y)}{g(z,x)} F_n^2 (x) + \frac{g(z,x)}{g(z,y)}F_n^2 (y) \Big]\\
\leq  \Big(F_n (y) - F_n (x)\Big)^2,
\end{multline*}
Therefore,
\[F_n^2 (z) \leq (1/2) g(z,z) \sum_{x,y} s(y-x) \Big(F_n (y) - F_n (x)\Big)^2.\]
Since $g(z,z) = g(0,0)$ is independent of $z$ and the jump rate $p(\cdot)$ has finite first moment if $\alpha >1$, we bound \eqref{glauber_1} from above by 
\begin{equation}\label{rw_dirichlet}
C \Big( \sum_{n} \sum_{x,y} s(y-x) \big(F_n (y) - F_n (x)\big)^2 \Big)^{1/2}.
\end{equation}
for some constant $C = C(\rho,\alpha,a)$. By the definition of $F_n$ and Cauchy-Schwarz inequality,
\begin{multline*}
| F_n^2 (y) - F_n^2 (x) |= \Big| \sum_{x \in A, y \notin A, \atop |A| = n} \big[\mf{f}^2 (A \backslash \{x\} \cup \{y\}) - \mf{f}^2 (A)\big]\Big| \\
\leq \sqrt{\sum_{x \in A, y \notin A, \atop |A| = n} \big[\mf{f} (A \backslash \{x\} \cup \{y\}) - \mf{f} (A)\big]^2} \times \sqrt{\sum_{x \in A, y \notin A, \atop |A| = n} \big[\mf{f} (A \backslash \{x\} \cup \{y\}) + \mf{f} (A)\big]^2} \\
\leq  \sqrt{\sum_{x \in A, y \notin A, \atop |A| = n} \big[\mf{f} (A \backslash \{x \}\cup \{y\}) - \mf{f} (A)\big]^2} \times \big(F_n(y) + F_n (x)\big).
\end{multline*}
Thus,
\[ \big(F_n (y) - F_n (x)\big)^2 \leq \sum_{A: x \in A, y \notin A, \atop |A| = n} \big(\mf{f} (A \backslash \{x\} \cup \{y\}) - \mf{f} (A)\big)^2.\]
This permits us to bound \eqref{rw_dirichlet} by
\[C \Big( \sum_{A \subset \Z^d_\star \atop A \; \text{finite}}  \sum_{x \in A,\atop y \notin A} s(y-x) \big(\mf{f} (A \backslash \{x \}\cup \{y\}) - \mf{f} (A)\big)^2 \Big)^{1/2} \leq C \<f,(-\gen)f\>_{\nu_\rho^\star}^{1/2}.\]
for some constant $C = C(\rho,\alpha,a)$. In the last inequality we use \eqref{dirichletform}. Since $Cx - x^2 \leq C^2/4$, we bound \eqref{kv} from above by  $C TN^{\alpha -2}$, which converges to zero since $\alpha < 2$. This concludes the proof for the case  $d \geq 2$ and $1 < \alpha < 2$.
\end{proof}

\begin{remark}
	Using the techniques presented in the following cases, one could also prove that 
	\[\E_{\nu_\rho^\star} \Big[ \sup_{0 \leq t \leq T} \Big( \int_0^{tN^\alpha} (\rho - \xi_s (z) )ds \Big)^2 \Big] \leq C N^\alpha\]
	uniformly in $z \in \Z_\star^d$.  Then, \eqref{eqn:3} follows directly from Cauchy-Schwarz inequality.
\end{remark}

\subsection{The case $d=1$ and $1< \alpha < 3/2$.}  Following the proof in the case $d \geq 2$ and $1< \alpha < 2$ line by line, we only need to prove \eqref{eqn:3}. By Cauchy-Schwarz inequality, 
\begin{multline}\label{oc2}
\E_{\nu_\rho^\star} \bigg[ \sup_{0 \leq t \leq T} \Big( \sum_{z \in \Z_\star} (z \cdot a)  p(z) \frac{1}{N} \int_0^{tN^\alpha} (\rho - \xi_s (z))\,ds \Big)^2  \bigg] \\
\leq C N^{-2} \sum_{z \in \Z_\star} |z \cdot a|  p(z) \E_{\nu_\rho^\star} \bigg[ \sup_{0 \leq t \leq T} \Big(  \int_0^{tN^\alpha} (\rho - \xi_s (z))\,ds \Big)^2  \bigg]
\end{multline}
for some constant $C = C(\alpha,a)$. To conclude the proof, we only need to show that there exists some constant $C$  such that
\[\sup_{z \in \Z_\star^d}\E_{\nu_\rho^\star} \Big[ \sup_{0 \leq t \leq T}  \Big(  \int_0^{tN^\alpha} (\rho - \xi_s (z))\,ds \Big)^2  \Big] \leq C N^{2\alpha-1},\]
or equivalently, for any $t > 0$,
\begin{equation}\label{oc1}
\sup_{z \in \Z_\star^d} \E_{\nu_\rho^\star} \Big[ \sup_{0 \leq t^\prime \leq t}  \Big(  \int_0^{t^\prime} (\rho - \xi_s (z))\,ds \Big)^2  \Big] \leq C t^{2-\tfrac{1}{\alpha}}.
\end{equation}
By \cite[Lemma 3.9]{sethuraman2000central}, the expectation in \eqref{oc1} is bounded by 
\[10 t \, \<\xi(z) - \rho,(1/t - \gen)^{-1} (\xi(z) - \rho)\>_{\nu_\rho^\star}.  \]
As in Appendix \ref{sec:tools}, let $\gens :=  (\gen + \gen^*)/2$ and $\gena := (\gen - \gen^*)/2$ be the symmetric and anti-symmetric part of the generator $\gen$ in $L^2 (\Omega_\star^d,\nu_\rho^\star)$ respectively.  By \cite[Eq.\,(3.3)]{sethuraman2000central}, for any local function $f: \Omega_\star^d \rightarrow \R$ and for any $\lambda > 0$,
\[ \<f ,(\lambda - \gen)^{-1}f\>_{\nu_\rho^\star}= \sup_{g\;\text{local}} \Big\{ 2 \<f,g\>_{\nu_\rho^\star} - \<g,(\lambda - \gens)g\>_{\nu_\rho^\star} - \<\gena g, (\lambda - \gens)^{-1} \gena g\>_{\nu_\rho^\star}\Big\}.\]
Since the generator $\gens$ is reversible with respect to $\nu_\rho^\star$, we have $\<\gena g, (\lambda - \gens)^{-1} \gena g\>_{\nu_\rho^\star} \geq 0$. Thus,
\[\<f ,(\lambda - \gen)^{-1}f\>_{\nu_\rho^\star} \leq \sup_{g\;\text{local}} \Big\{ 2 \<f,g\>_{\nu_\rho^\star} - \<g,(\lambda - \gens)g\>_{\nu_\rho^\star} \Big\} =  \|f\|_{-1,\lambda,\gens}.\]
We refer the readers to Appendix \ref{sec:tools} for precise definitions of the norm $\|\cdot\|_{-1,\lambda,\gens}$. In particular,
\[\<\xi(z) - \rho,(1/t - \gen)^{-1} (\xi(z) - \rho)\>_{\nu_\rho^\star} \leq \|\xi(z) - \rho\|_{-1,t^{-1},\mathcal{S}} = \<\xi(z) - \rho,(1/t - \gens)^{-1} (\xi(z) - \rho)\>_{\nu_\rho^\star}.\]
Since the function $\xi(z) - \rho$ has degree one, by Lemmas \ref{lem-a1} and \ref{lem-a2},  the last line is bounded by $C \|\chi_{\{z\}}\|_{-1,t^{-1},\mf{S}_{\rm ext}}$ for some constant $C = C(\alpha,\rho)$, where the function $\chi_{\{z\}}: \bar{\mc{E}}_{d,1} \rightarrow \R$ is defined as
\[\chi_{\{z\}} (\{x\}) = \begin{cases}
1, \quad\text{if } x = z;\\
0,\quad\text{otherwise},
\end{cases}\]
and $ \bar{\mc{E}}_{d,1}$ is the family of subsets of $\Z^d$ with only one element.  The generator $\mf{S}_{\rm ext}$ is defined in Appendix \ref{appen1}.  We only need to note that  when acting on degree one functions, it corresponds to random walk with transition probability kernel $s(\cdot)$.  The quantity $\|\chi_{\{z\}}\|_{-1,t^{-1},\mf{S}_{\rm ext}}$ is closely related to the occupation time of the symmetric exclusion process on $Z^d$ with transition probability $s(\cdot)$. Precisely speaking, denote by $\{\hat{\eta}_t\}_{t \geq 0}$ the exclusion process with generator $\mathbb{S}$, which acts on local functions $f: \Omega^d \rightarrow \R$ as
\[\mathbb{S} f (\eta) = \frac{1}{2} \sum_{x,y \in \Z^d} s(y-x) [f(\eta^{x,y}) - f(\eta)].\]
Then, one could prove that (see \cite[Eq.\,(3.2)]{bernardin2016occupation} for example)
\begin{align*}
\|\chi_{\{z\}}\|_{-1,t^{-1},\mf{S}_{\rm ext}} &= \frac{1}{\rho(1-\rho)} \<\hat{\eta}(z)-\rho, (t^{-1}-\mathbb{S})^{-1}  (\hat{\eta}(z)-\rho) \>_{\nu_\rho} \\
&= \frac{1}{2\rho(1-\rho)t^2} \int_0^\infty e^{-s/t}  \E_{\nu_\rho} \Big[ \Big(\int_0^s (\hat{\eta}_\tau (z) - \rho) d \tau\Big)^2\Big] ds.
\end{align*}
Note that the above quantity is independent of $z$ because of translation invariance. By \cite[Theorem 2.8]{bernardin2016occupation}, if $d = 1$ and $1 < \alpha < 2$,
\[\E_{\nu_\rho} \Big[ \Big(\int_0^s (\hat{\eta}_\tau (z) - \rho) d \tau\Big)^2 \leq C s^{2-1/\alpha}.\]
Therefore,
\[\|\chi_{\{z\}}\|_{-1,t^{-1},\mf{S}_{\rm ext}} \leq C t^{-2} \int_0^\infty e^{-s/t} s^{2-1/\alpha} ds \leq C t^{1-1/\alpha}\]
for some constant $C = C(\alpha,\rho)$. This proves \eqref{oc1} and completes the proof in the case $d=1$ and $1< \alpha < 3/2$.

\subsection{The case $\alpha = 1$.}  Recall in this case $\overline{X}^N_{t} = X_{tN} - tN(1-\rho) \sum_{\|z\|\leq N} z p(z)$. Take $\gamma (N) = N$ and we rewrite the martingale in \eqref{mn} as
\begin{multline*}
		\mc{M}_{t}^N (\beta) = \exp \Big\{i \beta (\overline{X}^N_{t} \cdot a) /N  - i \beta \sum_{\|z\| \leq N} (z \cdot a) p(z) \frac{1}{N} \int_0^{tN} (\rho-\xi_s (z)) ds\\
		 -  \sum_{z \in \Z^d_\star} \big(e^{i\beta (z \cdot a)/N} - 1 - i\beta N^{-1}(z \cdot a) \chi_{\{\|z\| \leq N\}}\big) p(z) \int_0^{t N} (1-\xi_s(z))\,ds\Big\}.
\end{multline*}
As in the proof of the previous cases, by dominated convergence theorem, we only need to prove 
\begin{equation}\label{cri1}
	\lim_{N \rightarrow \infty} \sum_{\|z\| \leq N} (z \cdot a) p(z) \frac{1}{N} \int_0^{tN} (\rho-\xi_s (z)) ds = 0
\end{equation}
and
\begin{equation}\label{cri2}
	\lim_{N \rightarrow \infty} \sum_{z \in \Z^d_\star} \big(e^{i\beta (z \cdot a)/N} - 1 - i\beta N^{-1}(z \cdot a) \chi_{\{\|z\| \leq N\}}\big) p(z) \int_0^{t N} (1-\xi_s(z))\,ds = \Phi_{\alpha,a} (\beta)
\end{equation}
in $\P_{\nu_\rho^\star}$-probability. 

We first prove \eqref{cri1}. By Cauchy-Schwarz inequality,
\begin{multline*}
\E_{\nu_\rho^\star} \Big[ \sup_{0 \leq t \leq T} \Big(\sum_{\|z\| \leq N} (z \cdot a) p(z) \frac{1}{N} \int_0^{tN} (\rho-\xi_s (z)) ds\Big)^2\Big]\\
 \leq N^{-2}  \Big(\sum_{\|z\| \leq N} |z \cdot a| p(z) \Big) \sum_{\|z\| \leq N} |z \cdot a|  p(z) \E_{\nu_\rho^\star} \Big[ \sup_{0 \leq t \leq T}  \Big( \int_0^{tN} (\rho-\xi_s (z)) ds\Big)^2 \Big] \\
 \leq C N^{-2} (\log N)^2 \sup_{\|z\| \leq N} \E_{\nu_\rho^\star} \Big[  \sup_{0 \leq t \leq T}  \Big( \int_0^{tN} (\rho-\xi_s (z)) ds\Big)^2 \Big].
\end{multline*}
As in the proof of the case $d=1$ and $1< \alpha < 3/2$,  there exists some constant $C = C(\alpha,\rho)$ such that
\[\E_{\nu_\rho^\star} \Big[ \sup_{0 \leq t^\prime \leq t}  \Big( \int_0^{t^\prime} (\rho-\xi_s (z)) ds\Big)^2 \Big] \leq C t^{-1} \int_0^\infty e^{-s/t}  \E_{\nu_\rho} \Big[ \Big(\int_0^s (\hat{\eta}_\tau (z) - \rho) d \tau\Big)^2\Big] ds.\]
By \cite[Theorem 2.8]{bernardin2016occupation}, if $d = 1$ and $\alpha = 1$,
\[\E_{\nu_\rho} \Big[ \Big(\int_0^s (\hat{\eta}_\tau (z) - \rho) d \tau\Big)^2\Big] \leq C s \log s,\]
and if $d \geq 2$ and $\alpha = 1$,
\[\E_{\nu_\rho} \Big[ \Big(\int_0^s (\hat{\eta}_\tau (z) - \rho) d \tau\Big)^2\Big] \leq C s.\]
Therefore,
\begin{equation}\label{cri3}
\E_{\nu_\rho^\star} \Big[ \sup_{0 \leq t^\prime \leq t}  \Big( \int_0^{t^\prime} (\rho-\xi_s (z)) ds\Big)^2 \Big]  \leq C t^{-1} \int_0^\infty e^{-s/t}  \big(s \log s + s\big) ds
\leq C t (1+\log t).
\end{equation}
In particular,
\[\sup_{\|z\| \leq N} \E_{\nu_\rho^\star} \Big[  \sup_{0 \leq t \leq T}  \Big( \int_0^{tN} (\rho-\xi_s (z)) ds\Big)^2 \Big] \leq C N \log N,\]
and thus
\[\E_{\nu_\rho^\star} \Big[ \sup_{0 \leq t \leq T} \Big(\sum_{\|z\| \leq N} (z \cdot a) p(z) \frac{1}{N} \int_0^{tN} (\rho-\xi_s (z)) ds\Big)^2\Big] \leq C N^{-1} (\log N)^3 \]
for some constant $C = C(\rho,\beta,a,T)$, which concludes the proof of \eqref{cri1}.

Now, we prove \eqref{cri2}.  One could check directly that the expectation on the right side of \eqref{cri2} converges to $\Phi_{\alpha,a} (\beta)$ as $N \rightarrow \infty$. Then, we only need to prove its variance vanishes,
\[\lim_{N \rightarrow \infty} {\rm Var}_{\nu_\rho^\star} \Big( \sum_{z \in \Z^d_\star} \big(e^{i\beta (z \cdot a)/N} - 1 - i\beta N^{-1}(z \cdot a) \chi_{\{\|z\| \leq N\}}\big) p(z) \int_0^{t N} (1-\xi_s(z))\,ds \Big) = 0.\]
By Cauchy-Schwarz inequality and \eqref{cri3}, we could bound the variance in the last line by
\begin{multline*}
\Big( \sum_{z \in \Z^d_\star} \big|e^{i\beta (z \cdot a)/N} - 1 - i\beta N^{-1}(z \cdot a) \chi_{\{\|z\| \leq N\}}\big| p(z) \Big)^2 \sup_{z \in \Z_\star^d}  \E_{\nu_\rho^\star} \Big[  \sup_{0 \leq t \leq T}\Big(\int_0^{tN} (\xi_s(z) - \rho) d s \Big)^2\Big] \\
\leq C N^{-1} \log N
\end{multline*}
for some constant $C = C(\rho,\beta,a,T)$. This proves \eqref{cri2} and completes the proof in the case $ \alpha =1$.

\subsection{The case $d\geq2$ and $\alpha = 2$.} Recall in this case $\overline{X}^N_{t}$ has a $\log N$ correction \[\overline{X}^N_{t} = X_{tN^2/\log N} - t(N^2/\log N)(1-\rho)m.\]  Take $\gamma(N) = N^2 / \log N$ in \eqref{mn},  then we could rewrite the martingale $\mc{M}_{t}^N (\beta)$ as
\begin{multline}\label{m2}
	\mc{M}_{t}^N (\beta) = \exp \Big\{i \beta (\overline{X}^N_{t} \cdot a) /N  - i \beta \sum_{z \in \Z^d_\star} (z \cdot a) p(z) \frac{1}{N} \int_0^{tN^2/\log N} (\rho-\xi_s (z)) ds\\
	-  \sum_{z \in \Z^d_\star} \big(e^{i\beta (z \cdot a)/N} - 1 - i\beta N^{-1}(z \cdot a) \big) p(z) \int_0^{t N^2/\log N} (1-\xi_s(z))\,ds\Big\}.
\end{multline}
Then,  we only need to prove that
\begin{equation}\label{cri4}
\lim_{N \rightarrow \infty} \sum_{z \in \Z^d_\star} (z \cdot a) p(z) \frac{1}{N} \int_0^{tN^2/\log N} (\rho-\xi_s (z)) ds = 0
\end{equation}
and
\begin{equation}\label{cri5}
	\lim_{N \rightarrow \infty} \sum_{z \in \Z^d_\star} \big(e^{i\beta (z \cdot a)/N} - 1 - i\beta N^{-1}(z \cdot a) \big) p(z) \int_0^{t N^2/\log N} (1-\xi_s(z))\,ds = \Phi_{\alpha,a} (\beta)
\end{equation}
in $\P_{\nu_\rho^\star}$-probability. 

The proof of \eqref{cri4} is similar to that of \eqref{cri1}.  We first use Cauchy-Schwarz inequality and bound the variance of it by
\[C N^{-2} \sup_{z \in \Z_\star^d} \E_{\nu_\rho^\star} \Big[ \sup_{0 \leq t \leq T} \Big( \int_0^{tN^2/\log N} (\rho-\xi_s (z)) ds \Big)^2\Big].\]
By \cite[Theorem 2.8]{bernardin2016occupation}, if $d = 2$ and $\alpha = 2$,
\[\E_{\nu_\rho} \Big[ \Big(\int_0^s (\hat{\eta}_\tau (z) - \rho) d \tau\Big)^2\Big] \leq C s \log \log s,\]
and  if $d \geq 3$ and $\alpha = 2$,
\[\E_{\nu_\rho} \Big[ \Big(\int_0^s (\hat{\eta}_\tau (z) - \rho) d \tau\Big)^2\Big] \leq C s.\]
Therefore,
\begin{multline}\label{cri7}
\E_{\nu_\rho^\star} \Big[ \sup_{0 \leq t^\prime \leq t} \Big( \int_0^{t^\prime} (\rho-\xi_s (z)) ds \Big)^2\Big] \leq Ct^{-1} \int_0^\infty e^{-s/t} \E_{\nu_\rho} \Big[ \Big(\int_0^s (\hat{\eta}_\tau (z) - \rho) d \tau\Big)^2\Big] ds\\
\leq  \begin{cases}
C t (\log \log t + 1) \quad &\text{if } d = 2, \alpha =2,\\
C t  \quad &\text{if } d \geq 3, \alpha =2.
\end{cases}
\end{multline}
In particular, the variance of the term on the right side of \eqref{cri4} is bounded by $C \log \log N / \log N$ if $d = 2$ and $\alpha = 2$, and by $C / \log N$ if $d \geq 3$ and $\alpha = 2$.  Note that the expectation of the term on the right side of \eqref{cri4} is zero. This proves \eqref{cri4}.

It remains to prove \eqref{cri5}. Since
\begin{multline*}
\Big| \sum_{\|z\| > N} \big(e^{i\beta (z \cdot a)/N} - 1 - i\beta N^{-1}(z \cdot a) \big) p(z) \int_0^{t N^2/\log N} (1-\xi_s(z))\,ds \Big| \\
\leq \frac{Ct}{\log N} \int_1^\infty \frac{r}{r^{d+2}} r^{d-1} dr \leq \frac{Ct}{\log N}
\end{multline*}
and
\begin{multline*}
	\Big| \sum_{\|z\| \leq  N} \Big(e^{i\beta (z \cdot a)/N} - 1 - i\beta N^{-1}(z \cdot a) + \frac{\beta^2 (z \cdot a)^2}{2N^2} \Big) p(z) \int_0^{t N^2/\log N} (1-\xi_s(z))\,ds \Big| \\
	\leq \frac{Ct}{\log N} \int_{\tfrac 1 N}^1 \frac{r^3}{r^{d+2}} r^{d-1} dr \leq \frac{Ct}{\log N},
\end{multline*}
we only need to prove
\begin{equation}\label{cri6}
	\lim_{N \rightarrow \infty} \frac{\beta^2}{2} \sum_{\|z\| \leq  N} \Big(\frac{z\cdot a}{N}\Big)^2 p(z) \int_0^{t N^2/\log N} (1-\xi_s(z))\,ds =- \Phi_{\alpha,a} (\beta)\end{equation}
in $\P_{\nu_\rho^\star}$-probability.  It is easy to see the expectation of the term on the right side of the last line converges to $\Phi_{\alpha,a} (\beta)$. Therefore, we only need to prove its variance converges to zero. By Cauchy-Schwarz inequality and \eqref{cri7}, we bound the variance by
\begin{multline*}
	C \Big[ \sum_{\|z\| \leq  N} \Big(\frac{z\cdot a}{N}\Big)^2 p(z) \Big]^2 \sup_{\|z\| \leq N} \E_{\nu_\rho^\star} \Big[\sup_{0 \leq t \leq T} \Big( \int_0^{t N^2/\log N} (\rho-\xi_s(z))\,ds \Big)^2\Big] \\
	\leq  \begin{cases}
		C (\log N)^2 / N^2 \quad &\text{if } d = 2, \alpha =2,\\
		C \log N / N^2  \quad &\text{if } d \geq 3, \alpha =2.
	\end{cases}
\end{multline*}
This proves \eqref{cri5} and concludes the proof in the case $d = 2$ and $\alpha \geq 2$.

\section{Tightness}\label{sec:tight} For $N \geq 1$, denote by $Q^N$ the distribution on the Skorohod space $D([0,T],\R^d)$ of the process $\big\{N^{-1} \overline{X}^N_t\big\}_{0 \leq t \leq T}$. In this section, we prove the tightness of the sequence of distributions $\{Q^N\}_{N \geq 1}$.

\begin{lemma}[Tightness]\label{lem:tight}
Under the assumptions in Theorem \ref{thm:invariant}, the sequence of distributions $\{Q^N\}_{N \geq 1}$ is tight in the Skorohod space $D([0,T],\R^d)$.
\end{lemma}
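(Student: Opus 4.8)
The plan is to verify Aldous's tightness criterion for c\`adl\`ag processes (see e.g.\ \cite{billingsley2013convergence}). The one-dimensional marginals of $\{N^{-1}\overline{X}^N_t\}$ converge by Proposition \ref{pro:charac}, hence are tight, so it remains to show that for every $\epsilon>0$,
\[
\lim_{\delta\to0}\ \limsup_{N\to\infty}\ \sup_{\tau}\ \sup_{0\le\theta\le\delta}\ \P_{\nu_\rho^\star}\big(|N^{-1}(\overline{X}^N_{\tau+\theta}-\overline{X}^N_\tau)|\ge\epsilon\big)=0,
\]
where $\tau$ ranges over stopping times bounded by $T$ of the filtration generated by $\{\xi_s\}$ and $\{N^z_s\}$, so that $\tau\gamma$ is again a stopping time of that filtration. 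Let $\gamma=\gamma(N)$ be the time change of the case at hand ($\gamma=N^\alpha$ if $0<\alpha<2$, with $N^\alpha:=N$ when $\alpha=1$, and $\gamma=N^2/\log N$ if $\alpha=2$) and let $b_N$ be the subtracted drift, so that $\overline{X}^N_t=X_{t\gamma}-t\gamma\,b_N$ with $b_N=0$ if $0<\alpha<1$, $b_N=(1-\rho)\sum_{\|z\|\le N}zp(z)$ if $\alpha=1$, and $b_N=(1-\rho)m$ if $\alpha>1$. Using \eqref{X_t} and compensating the Poisson processes carried by the small jumps, I would write
\[
\overline{X}^N_{\tau+\theta}-\overline{X}^N_\tau=I_M+I_L+\big(I_C-\theta\gamma\,b_N\big),
\]
with $I_M=\sum_{\|z\|\le N}z(\widetilde N^z_{(\tau+\theta)\gamma}-\widetilde N^z_{\tau\gamma})$, $I_L=\sum_{\|z\|>N}z(N^z_{(\tau+\theta)\gamma}-N^z_{\tau\gamma})$, $I_C=\sum_{\|z\|\le N}zp(z)\int_{\tau\gamma}^{(\tau+\theta)\gamma}(1-\xi_s(z))\,ds$ and $\widetilde N^z_t=N^z_t-p(z)\int_0^t(1-\xi_s(z))\,ds$.

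The terms $I_M$, $I_L$ and the deterministic mismatch are elementary. By the strong Markov property and comparison with a Poisson process, $\P_{\nu_\rho^\star}(I_L\neq0)\le C\theta\gamma\sum_{\|z\|>N}p(z)\le C\theta\gamma N^{-\alpha}\le C\theta$, uniformly in $\tau$ and $N$, which vanishes as $\theta\to0$. Since the $\{\widetilde N^z\}$ have no common jumps, $I_M$ is a martingale increment whose predictable quadratic variation has trace at most $\int_{\tau\gamma}^{(\tau+\theta)\gamma}\sum_{\|z\|\le N}\|z\|^2p(z)(1-\xi_s(z))\,ds\le\theta\gamma\sum_{\|z\|\le N}\|z\|^2p(z)\le C\theta N^2$, the last bound holding in every regime because $\sum_{\|z\|\le N}\|z\|^2p(z)$ is of order $N^{2-\alpha}$ (of order $\log N$ when $\alpha=2$) and $\gamma$ is matched accordingly; hence $\E_{\nu_\rho^\star}[|N^{-1}I_M|^2]\le C\theta$ and Chebyshev's inequality disposes of it. Finally, writing $1-\xi_s(z)=(1-\rho)+(\rho-\xi_s(z))$, the $(1-\rho)$-part of $I_C$ equals $\theta\gamma(1-\rho)\sum_{\|z\|\le N}zp(z)$; this cancels $\theta\gamma\,b_N$ exactly when $\alpha=1$, differs from it by $\theta\gamma(1-\rho)\sum_{\|z\|>N}zp(z)$ when $\alpha>1$, and is the whole of it when $0<\alpha<1$, and in each case the resulting deterministic remainder has norm $\le C\theta N$ --- using $\sum_{\|z\|\le N}\|z\|p(z)\le CN^{1-\alpha}$ for $\alpha<1$, $\sum_{\|z\|>N}\|z\|p(z)\le CN^{1-\alpha}$ for $\alpha>1$, and $\gamma N^{1-\alpha}\le CN$ --- so it contributes $\le C\theta$ after division by $N$.

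What remains, and what I expect to be the main obstacle, is the occupation-time fluctuation $N^{-1}\sum_{\|z\|\le N}zp(z)\int_{\tau\gamma}^{(\tau+\theta)\gamma}(\rho-\xi_s(z))\,ds$, which I view as $V^N_{\tau+\theta}-V^N_\tau$ for $V^N_t:=N^{-1}\sum_{\|z\|\le N}zp(z)\int_0^{t\gamma}(\rho-\xi_s(z))\,ds$ (for $0<\alpha<1$ it is absent, since then $b_N=0$ and $|N^{-1}I_C|\le C\theta$ directly). Over the \emph{random} interval $[\tau\gamma,(\tau+\theta)\gamma]$ this quantity is not of order $\theta$ for fixed $N$, and there is no clean way to restart the environment process in equilibrium at the stopping time $\tau\gamma$, so the fixed-time arguments of Section \ref{sec:invariance} do not apply verbatim. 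The remedy is that, since $\tau\le T$ and $\theta\le1$, both time arguments lie in $[0,T+1]$, whence $|V^N_{\tau+\theta}-V^N_\tau|\le2\sup_{0\le t\le T+1}|V^N_t|$, and the right-hand side is exactly the quantity controlled in Section \ref{sec:invariance} --- which is precisely why the supremum over time was carried inside the expectations there. Applied coordinatewise on $[0,T+1]$, the estimate behind \eqref{kv} (case $d\ge2$, $1<\alpha<2$), the estimates leading to \eqref{oc1} (case $d=1$, $1<\alpha<3/2$), the bound \eqref{cri3} (case $\alpha=1$), and \eqref{cri7} together with the Cauchy-Schwarz step used for \eqref{cri5} (case $\alpha=2$) all give $\E_{\nu_\rho^\star}[\sup_{0\le t\le T+1}|V^N_t|^2]\to0$, uniformly in $\tau$ and $\theta$. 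Choosing first $N$ large, to make the occupation-time contribution small uniformly in $\tau,\theta$, and then $\delta$ small, to control $I_M$, $I_L$ and the deterministic mismatch, yields Aldous's condition and hence the lemma.
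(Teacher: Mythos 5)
Your proof is correct and follows essentially the same route as the paper's: Aldous's criterion, one-dimensional tightness from Proposition \ref{pro:charac}, a truncation at $\|z\|\leq N$ splitting the increment into a compensated martingale (controlled by its quadratic variation, of order $\theta$ after rescaling), a large-jump remainder, a deterministic drift mismatch, and the occupation-time fluctuation, with the last term bounded by twice its running supremum and killed by the $\sup$-in-time $L^2$ estimates of Section \ref{sec:invariance} --- exactly the mechanism the paper flags when it keeps the supremum inside the expectation in \eqref{kv}. The only genuine (and minor) divergence is that you dispose of the heavy-tail part via the elementary bound $\P(I_L\neq 0)\leq C\theta\gamma\sum_{\|z\|>N}p(z)\leq C\theta$, whereas the paper controls $\overline{R}^N$ (and, for $0<\alpha<1$, the whole increment) through the exponential martingale and the inequality $\P(|Y|>\varepsilon)\leq\tfrac{\varepsilon}{2}\int_{-2/\varepsilon}^{2/\varepsilon}|1-\E[e^{i\beta Y}]|\,d\beta$; both are valid, and yours is arguably more elementary and uniform across the regimes of $\alpha$.
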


By Prohorov's theorem and Aldous' criterion (\emph{cf.}\,\cite[Section 4.1]{klscaling} for example), we only need to check the following two conditions in order to prove tightness.

\begin{proposition}
The sequence of the distributions $\{Q^N\}_{N \geq 1}$ is tight if 
\begin{enumerate}[(i)]
	\item for any $0 \leq t \leq T$, 
	\begin{equation}\label{tight_1}
	\lim_{M \rightarrow \infty} \limsup_{N \rightarrow \infty} Q^N \big(|\omega (t)| > M\big) = 0;
	\end{equation}
    \item for any $\varepsilon > 0$, 
    \begin{equation}\label{tight_2}
    	\lim_{\theta \rightarrow 0} \limsup_{N \rightarrow \infty} \sup_{\tau \in \mathfrak{T}_T\atop \delta \leq \theta} Q^N \big(|\omega (\tau+\delta) -\omega(\tau)| > \varepsilon\big) = 0,
    \end{equation}
where $\mathfrak{T}_T$ is the set of stopping times bounded from above by $T$.
\end{enumerate}
\end{proposition}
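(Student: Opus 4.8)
The plan is to deduce the proposition from two classical facts: Prohorov's theorem and Aldous' tightness criterion, in the form recorded in \cite[Section 4.1]{klscaling}. By Prohorov's theorem the family $\{Q^N\}_{N\geq1}$ of probability measures on $D([0,T],\R^d)$ is tight if and only if it is relatively compact, so it suffices to verify the hypotheses of a suitable compactness criterion. The criterion I would use is Aldous' criterion for $\R^d$--valued càdlàg processes: a sequence of laws on $D([0,T],\R^d)$ is tight provided (a) for each $t$ in a dense subset of $[0,T]$ containing $T$ the laws of the marginal $\omega(t)$ form a tight family on $\R^d$, and (b) for every $\varepsilon>0$, $\lim_{\theta\to0}\limsup_{N\to\infty}\sup_{\tau\in\mathfrak{T}_T,\,\delta\leq\theta}Q^N(|\omega(\tau+\delta)-\omega(\tau)|>\varepsilon)=0$. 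The $\R^d$--valued version follows from the scalar one in \cite{klscaling} applied to each coordinate: since $|v|=\max_{1\leq j\leq d}|v_j|$, the event $\{|v|>\varepsilon\}$ is contained in $\bigcup_j\{|v_j|>\varepsilon\}$, so the scalar Aldous condition for each of the $d$ coordinates implies (b), while marginal tightness on $\R^d$ is equivalent to marginal tightness of each coordinate.

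With this in hand the proof is immediate: hypothesis (b) is exactly \eqref{tight_2}, and hypothesis (a) is weaker than \eqref{tight_1}, which asserts the marginal tightness of $\omega(t)$ for \emph{every} $t\in[0,T]$; moreover at $t=0$ it is automatic because $\overline{X}^N_0=X_0=0$ almost surely. Hence, assuming \eqref{tight_1} and \eqref{tight_2}, Aldous' criterion applies and $\{Q^N\}_{N\geq1}$ is tight. So the step-by-step plan is: (1) invoke Prohorov to pass to relative compactness; (2) reduce the $\R^d$--valued problem to $d$ scalar problems via the display $\{|v|>\varepsilon\}\subset\bigcup_j\{|v_j|>\varepsilon\}$; (3) match \eqref{tight_1}--\eqref{tight_2} with the two hypotheses of the scalar Aldous criterion; (4) conclude.

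Because the proposition only records a reduction via standard theorems, there is no genuine obstacle in its proof. The real difficulty — which the remaining arguments of Section \ref{sec:tight} address — lies in verifying \eqref{tight_1} and \eqref{tight_2}. Condition \eqref{tight_1} should follow from the convergence of one--dimensional distributions established in Proposition \ref{pro:charac}, since weak convergence of $N^{-1}\overline{X}^N_t$ to $Y_{\alpha,t}$ makes the family $\{N^{-1}\overline{X}^N_t\}_N$ tight for each fixed $t$. The harder point is \eqref{tight_2}: there I would use the strong Markov property at the stopping time $\tau$ to replace $\omega(\tau+\delta)-\omega(\tau)$ by a fixed--time increment, over an interval of length at most $\theta$, of the environment process started from $\nu_\rho^\star$, and then control this increment through the $\sup$--over--$[0,T]$ versions of the Kipnis--Varadhan and occupation--time estimates already derived in \eqref{kv} and \eqref{oc2} (the suprema inside those expectations were inserted precisely for this purpose), combined with the decomposition of $X_t$ into its martingale part and compensator as in \eqref{mn}.
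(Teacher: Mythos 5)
Your proposal is correct and follows exactly the route the paper takes: the paper justifies this proposition by simply invoking Prohorov's theorem and Aldous' criterion as recorded in \cite[Section 4.1]{klscaling}, which is precisely your steps (1)--(4); your extra remarks on the coordinatewise reduction and on how \eqref{tight_1}--\eqref{tight_2} are later verified are consistent with, though more detailed than, what the paper writes.
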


One could check directly that the mapping $\beta \mapsto \Phi_{\alpha,a} (\beta)$   is continuous at zero. By Continuity theorem (\emph{cf}.\,\cite[Theorem 3.3.17]{durrett2017probability} for example), for any fixed $0 \leq t \leq T$, the sequence of random elements $\big\{N^{-1} \overline{X}^N_t\big\}_{N \geq 1}$ is tight. Therefore, condition \eqref{tight_1} is satisfied.  In the rest of this section, we prove condition \eqref{tight_2} in different regimes of $d$ and $\alpha$. 

\subsection{The case $0 < \alpha <1$.}\label{subsec:5.1}  For any stopping time $\tau \leq T$, any $\delta \leq \theta$, and  any $\varepsilon > 0$,
\begin{multline*}
	\P_{\nu_\rho^\star} \Big( \Big| \frac{(\overline{X}^N_{\tau+\delta} -\overline{X}^N_{\tau}) \cdot a}{N} \Big| > \varepsilon \Big) \leq \frac{\varepsilon}{2} \int_{-2/ \varepsilon}^{2/\varepsilon} \Big|1 - \E_{\nu_\rho^\star} \Big[ \exp \Big\{ i \beta \frac{(\overline{X}^N_{\tau+\delta} -\overline{X}^N_{\tau}) \cdot a}{N}  \Big\} \Big]\Big| d\beta\\
	\leq \frac{\varepsilon}{2} \int_{-2/ \varepsilon}^{2/\varepsilon}  \E_{\nu_\rho^\star} \Big[ \Big| 1 - \exp \Big\{ - \int_{\tau N^\alpha}^{(\tau+\delta)N^\alpha}  \sum_{z \in \Z^d_\star} \big(e^{i\beta(z\cdot a)/N} -1\big) p(z) (1-\xi_s (z)) ds  \Big\} \Big| \Big] d\beta.
\end{multline*}
Above, the second inequality follows from the fact that $\mathcal{M}^N_{\tau+\cdot} (\beta) / \mathcal{M}^N_{\tau} (\beta)$ is a mean one complex martingale with $\mathcal{M}^{N}_{\cdot} (\beta)$ defined in \eqref{mn}.  Since for any $|\beta| \leq 2/ \varepsilon$,
\[\Big|  N^\alpha \sum_{z \in \Z^d_\star} \big(e^{i\beta(z\cdot a)/N} -1\big) p(z) (1-\xi (z)) \Big| \leq C \]
for some constant $C = C(a,\varepsilon)$, and $|e^z - 1| \leq |z| e^{|z|}$ for any $z \in \mathbb{C}$, we have
\[\P_{\nu_\rho^\star} \Big( \Big| \frac{(\overline{X}^N_{\tau+\delta} -\overline{X}^N_{\tau}) \cdot a}{N} \Big| > \varepsilon \Big)\leq C\delta e^{C \delta} \leq C \theta e^{C \theta}. \]
This  proves \eqref{tight_2}  if $0 < \alpha <1$.

\subsection{The case $d \geq 2, 1 < \alpha <2$ and $d = 1, 1 < \alpha < 3/2$.}\label{subsec:5.2} To prove \eqref{tight_2} in this case, we need a truncation argument.  Let
\[\overline{X}^N_t := \overline{U}_{t}^N +  \overline{R}_t^N,\]
where
\[\overline{U}_{t}^N = \sum_{\|z\| \leq N} z \big[N^z_{tN^\alpha} - t N^\alpha(1-\rho) p(z)\big],\quad \overline{R}_t^N = \sum_{\|z\| > N} z \big[N^z_{tN^\alpha} - t N^\alpha(1-\rho) p(z)\big].\]

We first deal with the term $\overline{R}_t^N$, which is similar to the case $0 < \alpha <1$.  For any stopping time $\tau \leq T$, any $\delta \leq \theta$, and  any $\varepsilon > 0$,
\begin{multline*}
	\P_{\nu_\rho^\star} \Big( \Big| \frac{\big(\overline{R}_{\tau+\delta}^N-\overline{R}_{\tau}^N\big) \cdot a}{N} \Big| > \varepsilon \Big) \leq \frac{\varepsilon}{2} \int_{-2/ \varepsilon}^{2/\varepsilon} \Big|1 - \E_{\nu_\rho^\star} \Big[ \exp \Big\{ i \beta \frac{\big(\overline{R}_{\tau+\delta}^N-\overline{R}_{\tau}^N \big)\cdot a}{N}  \Big\} \Big]\Big| d\beta\\
	\leq \frac{\varepsilon}{2} \int_{-2/ \varepsilon}^{2/\varepsilon}  \E_{\nu_\rho^\star} \Big[ \Big| 1 - \exp \Big\{\frac{1}{N^\alpha}\int_{\tau N^\alpha}^{(\tau+\delta)N^\alpha} \widehat{\Gamma}_a^{N} (\xi_s)ds  \Big\} \Big| \Big] d\beta,
\end{multline*}
where $\widehat{\Gamma}_a^{N}$ is the truncation of $\Gamma^{N}_a $ defined in \eqref{GammaN} at the level $\|z\| > N$, 
\[	\widehat{\Gamma}_a^{N} (\xi) = \frac{1}{N^d} \sum_{\|z\|>N} p(\tfrac{z}{N}) (e^{i \beta (z \cdot a) /N} - 1 ) (1 - \xi (z)) 
- i \beta (1-\rho) N^{\alpha -1} \sum_{\|z\|>N} (z \cdot a)  p(z)  .\]
Since  $1 < \alpha < 2$,   there exists a constant $C=C(a,\varepsilon)$ such that for any $|\beta| \leq 2/\varepsilon$,
\[ |\widehat{\Gamma}_a^{N} (\xi)| \leq C   \int_{1}^\infty r^{-d-\alpha} (1+ r) r^{d-1} dr \leq  C.\]
Therefore,
\begin{equation}\label{tight1}
	\P_{\nu_\rho^\star} \Big( \Big| \frac{\big(\overline{R}_{\tau+\delta}^N-\overline{R}_{\tau}^N\big) \cdot a}{N} \Big| > \varepsilon \Big) \leq C\delta e^{C \delta} \leq C \theta e^{C \theta}.
\end{equation}

It remains to deal with the term $\overline{U}_t^N$. Since
\[N^z_{t} - \int_0^{t} p(z) (1-\xi_s (z))\,ds\]
is a martingale with quadratic variation  $\int_0^{t} p(z) (1-\xi_s (z))\,ds$, we may write $N^{-1} \overline{U}_{t}^N$ as
\begin{equation}\label{xNl}
N^{-1} \overline{U}_{t}^N = \mathfrak{m}^{N}_t + \frac{1}{N} \sum_{\|z\| \leq N} z p(z) \int_0^{tN^\alpha} (\rho - \xi_s (z))ds,
\end{equation}
where $\mathfrak{m}^{N}_t$ is a martingale with quadratic variation bounded by
\[\E_{\nu_\rho^\star} \big[ \big(\mathfrak{m}^{N}_t \cdot a\big)^2  \big] \leq \frac{Ct N^\alpha }{N^2} \sum_{\|z \| \leq N} |z \cdot a|^2 p(z) \leq Ct \int_{N^{-1}}^{1} r^{2-d-\alpha} r^{d-1} dr \leq C t.\] 
Therefore,
\begin{equation}\label{tight2}
\E_{\nu_\rho^\star} \big[\big\{  \big(\mathfrak{m}^{N}_{\tau+\delta} - \mathfrak{m}^{N}_{\tau}\big) \cdot a \big\}^2  \big] \leq C \delta \leq C \theta.
\end{equation}
Following the proof of \eqref{kv} and \eqref{oc2},  we have 
\begin{equation}\label{tight3}
\lim_{N \rightarrow \infty}\,\E_{\nu_\rho^\star} \Big[ \sup_{0 \leq t \leq T} \Big| \frac{1}{N} \sum_{\|z\| \leq N} (z \cdot a) p(z)  \int_0^{tN^\alpha} (\rho - \xi_s (z))\,ds \Big|^2  \Big] = 0.
\end{equation}
Since 
\begin{multline*}
\P_{\nu_\rho^\star} \Big( \Big| \frac{\big(\overline{U}_{\tau+\delta}^N-\overline{U}_{\tau}^N\big) \cdot a}{N} \Big| > \varepsilon \Big) \leq \P_{\nu_\rho^\star} \Big( \Big| \big(\mf{m}_{\tau+\delta}^N-\mf{m}_{\tau}^N\big) \cdot a \Big| > \varepsilon/2 \Big) \\
+ \P_{\nu_\rho^\star} \Big( \sup_{0 \leq t \leq T} \Big| \frac{1}{N} \sum_{\|z\| \leq N} (z \cdot a) p(z)  \int_0^{tN^\alpha} (\rho - \xi_s (z))\,ds \Big| > \varepsilon/4 \Big),
\end{multline*}
the term $N^{-1} \overline{U}_t^N$ also satisfies \eqref{tight_2} by  \eqref{tight2} and \eqref{tight3}.  This concludes the proof of Lemma \ref{lem:tight} in the case $d \geq 2, 1 < \alpha < 2$ and $d = 1, 1 < \alpha < 3/2$.
 
\subsection{The case $\alpha =1$.}\label{subsec:5.3} As in the last subsection, we decompose $\overline{X}^N_t := \overline{U}_{t}^N +  \overline{R}_t^N$, but with 
\[\overline{U}_{t}^N = \sum_{\|z\| \leq N} z \big[N^z_{tN} - t N (1-\rho) p(z)\big],\quad \overline{R}_t^N = \sum_{\|z\| > N} z N^z_{t} .\]
It is the same as in Subsection \ref{subsec:5.2} to check $N^{-1} \overline{U}_{t}^N $ satisfies \eqref{tight_2}.  For this reason, we omit the proof. To deal with the second term, for any stopping time $\tau \leq T$, any $\delta \leq \theta$, and  any $\varepsilon > 0$,
\begin{multline*}
	\P_{\nu_\rho^\star} \Big( \Big| \frac{\big(\overline{R}_{\tau+\delta}^N-\overline{R}_{\tau}^N\big) \cdot a}{N} \Big| > \varepsilon \Big) \\
	\leq \frac{\varepsilon}{2} \int_{-2/ \varepsilon}^{2/\varepsilon}  \E_{\nu_\rho^\star} \Big[ \Big| 1 - \exp \Big\{ - \int_{\tau N}^{(\tau+\delta)N}  \sum_{\|z\|>N} \big(e^{i\beta(z\cdot a)/N} -1\big) p(z) (1-\xi_s (z)) ds  \Big\} \Big| \Big] d\beta.
\end{multline*}
Since there exists a finite constant $C$ such that
\[\Big|N \sum_{\|z\|>N} \big(e^{i\beta(z\cdot a)/N} -1\big) p(z) (1-\xi(z)) \Big| \leq C,\]
we have
\[\P_{\nu_\rho^\star} \Big( \Big| \frac{\big(\overline{R}_{\tau+\delta}^N-\overline{R}_{\tau}^N\big) \cdot a}{N} \Big| > \varepsilon \Big) \leq C \theta e^{C \theta}.\]
This concludes the proof for the case $\alpha = 1$.

\subsection{The case $d\geq2$ and $\alpha = 2$.} In this case, we decompose $\overline{X}^N_t := \overline{U}_{t}^N +  \overline{R}_t^N$ with
\begin{align*}
\overline{U}_{t}^N = \sum_{\|z\| \leq N} z \big[N^z_{tN^2/\log N} - t (N^2/\log N)(1-\rho) p(z)\big],\\
\overline{R}_t^N = \sum_{\|z\| > N} z \big[N^z_{tN^2/\log N} - t (N^2/\log N)(1-\rho) p(z)\big].
\end{align*}
As in Subsection \ref{subsec:5.2} and by \eqref{m2},
\[
	\P_{\nu_\rho^\star} \Big( \Big| \frac{\big(\overline{R}_{\tau+\delta}^N-\overline{R}_{\tau}^N\big) \cdot a}{N} \Big| > \varepsilon \Big) 
	\leq \frac{\varepsilon}{2} \int_{-2/ \varepsilon}^{2/\varepsilon}  \E_{\nu_\rho^\star} \Big[ \Big| 1 - \exp \Big\{\int_{\tau N^2/\log N}^{(\tau+\delta)N^2/\log N} W_a^{N} (\xi_s)ds  \Big\} \Big| \Big] d\beta,
\]
where \[W_a^N (\xi) = i \beta N^{-1} \sum_{\|z\|>N} (z \cdot a) p(z) (\rho - \xi (z)) + \sum_{\|z\|>N} \big(e^{i\beta (z \cdot a)/N} - 1 - i\beta N^{-1}(z \cdot a) \big) p(z)(1-\xi(z)).\]
Since
\[\Big| \frac{N^2}{\log N} W_a^N (\xi)  \Big| \leq \frac{C}{\log N} \int_1^\infty r^{1-d-2}  r^{d-1} dr \leq \frac{C}{\log N}, \]
we have
\[\P_{\nu_\rho^\star} \Big( \Big| \frac{\big(\overline{R}_{\tau+\delta}^N-\overline{R}_{\tau}^N\big) \cdot a}{N} \Big| > \varepsilon \Big) \leq \frac{C \theta }{\log N}e^{C \theta / \log N}.\]
Thus, $N^{-1} \overline{R}_t^N$ satisfies condition \eqref{tight_2}. For $\overline{U}_{t}^N$, it remains to check the quadratic variation of the associated martingale $\mf{m}^N_t$  is bounded by $Ct$, where
\[N^{-1} \overline{U}_{t}^N = \mathfrak{m}^{N}_t + \frac{1}{N} \sum_{\|z\| \leq N} z p(z) \int_0^{tN^2/\log N} (\rho - \xi_s (z))ds.\]
Indeed, we bound it by
\[\frac{Ct}{\log N} \sum_{\|z\| \leq  N} (z \cdot a)^2 p(z) \leq Ct.\]
This concludes the proof of the tightness in  the case $d\geq2$ and $\alpha = 2$.

\appendix

\section{Tools}\label{sec:tools}

\subsection{$H_1$ and $H_{-1}$ norms.}\label{subsec:norms} Let $\{\omega_t\}_{t \geq 0}$ be a continuous-time Markov process on some complete and separable metric space $\Omega$ endowed with its Borel $\sigma$-algebra. Assume the process $\omega_t$ is reversible with respect to some probability measure $\pi$ on $\Omega$. Denote by $L$ the generator of the process $\omega_t$, and by $\mathcal{C}$ the core of the generator $L$.

For $\lambda > 0$ and functions $f \in L^2 (\Omega,\pi)$, define
\[\|f\|^2_{1,\lambda,L} = \<f,(\lambda - L) f\>_{\pi}, \quad \|f\|^2_{-1,\lambda,L} = \<f,(\lambda - L)^{-1} f\>_{\pi}.\]
It is well known that the norm $\|\,\cdot\,\|_{-1,\lambda,L}$ has the following variational formula,
\[\|f\|_{-1,\lambda,L}^2  = \sup_{g \in \mathcal{C}} \big\{ 2 \<f,g\>_{\pi} - \|g\|^2_{1,\lambda,L} \big\}.\]
We refer the readers to \cite{komorowski2012fluctuations} for a comprehensive study on the above norms.

\subsection{Comparison between different norms.} For $d \geq 1$, let $\mathcal{E}_d$ be the family of finite subsets of $\Z_\star^d$, and let $\mathcal{E}_{d,n}$ be those subsets of cardinality $n \geq 0$. Fix $\rho \in (0,1)$. For non-empty $A \in \mc{E}_d$, define
\[\Psi_A (\xi) = \prod_{x \in A} \frac{\xi(x) - \rho}{\sqrt{\rho(1-\rho)}}.\]
By convention, $\Psi_{\emptyset} = 1$. Then, $\{\Psi_A: A \in \mc{E}_d\}$ is a basis of the Hilbert space $L^2 \big( \Omega_\star^d, \nu_\rho^\star\big)$. Thus, for any function $f \in L^2 \big( \Omega_\star^d, \nu_\rho^\star\big)$, 
\[f (\xi) = \sum_{A \in \mc{E}_d} \mf{f} (A) \Psi_A (\xi)\]
for some coefficient function $\mf{f}: \mc{E}_d \rightarrow \R$. Moreover, for any functions $f,g \in L^2 \big( \Omega_\star^d, \nu_\rho^\star\big)$,
\[\<f,g\>_{{\nu_\rho^\star}} = \sum_{A \in \mc{E}_d} \mf{f}(A) \mf{g} (A) =: \<\mf{f},\mf{g}\>.\]
We say the  function $f$ and its coefficient function $\mf{f}$ are of degree $n$ if $f$ is in the span of $\{\Psi_A: \, A \in \mc{E}_{d,n}\}$, or equivalently, the support of $\mf{f}$ is contained in $\mc{E}_{d,n}$, 

We use $\gens$ to denote the symmetric part of the generator $\gen$ in $L^2 (\nu_\rho^\star)$, i.e., $\gens = (\gen+\gen^*)/2$. We could decompose $\gens=\gens^e + \gens^t$, where for local function  $f: \Omega_\star^d \rightarrow \R$,
\begin{align*}
	\gens^e f (\xi) &= \sum_{x,y \in \Z^d_\star} s(y-x) \xi(x) (1-\xi(y)) [f(\xi^{x,y}) - f(\xi)],\\
	\gens^t f(\xi) &= 	 \sum_{z\in \Z^d_\star}  s(z) (1-\xi (z)) [f(\theta_z \xi) - f(\xi)].
\end{align*}
Moreover, the operator $\gens =\gens^e + \gens^t $ has counterpart $\mf{S} = \mf{S}^e + \mf{S}^t$, where 
\[\gens^e f (\xi)= \sum_{A \in \mc{E}_d} (\mf{S}^e\mf{f}) (A) \Psi_A (\xi), \quad  \gens^t f (\xi)= \sum_{A \in \mc{E}_d} (\mf{S}^t \mf{f} ) (A) \Psi_A (\xi).\]
By \cite{sethuraman2000diffusive}, the operators $\mf{S}^e$ and  $\mf{S}^t$ have the following explicit expressions,
\begin{align*}
(\mf{S}^e\mf{f}) (A) =& \frac{1}{2} \sum_{x,y \in \Z_\star^d} s(y-x) [\mf{f}(A_{x,y}) - \mf{f}(A)],\\
(\mf{S}^t\mf{f}) (A) =& (1-\rho) \sum_{z \notin A, \atop z \in \Z_\star^d} s(z) [\mf{f} (\theta_{-z}A) - \mf{f}(A)]  + \rho \sum_{z \in A} s(z) [\mf{f} (\theta_{-z}A) - \mf{f}(A)]\\
&+ \sqrt{\rho(1-\rho)} \sum_{z \notin A, \atop z \in \Z_\star^d} s(z) [  \mf{f} (A \cup \{z\})-\mf{f} (\theta_{-z}(A \cup \{z\}))]\\
&+ \sqrt{\rho(1-\rho)} \sum_{z \in A} s(z) [\mf{f} (A \backslash \{z\}) - \mf{f}(\theta_{-z} (A \backslash \{z\}))].
\end{align*}
Above, for $A \subset \Z_\star^d$, 
\[A_{x,y} = \begin{cases}
	A \backslash \{x\} \cup \{y\}, \quad &x \in A,\,y \notin A;\\
	A \backslash \{y\} \cup \{x\}, \quad &x \notin A,\,y \in A;\\
	A \quad &\text{otherwise},
\end{cases}\qquad
\theta_x A = \begin{cases}
	A+x, \quad &-x \notin A;\\
	(A+x) \backslash \{0\} \cup \{x\}, \quad &-x \in A,
\end{cases}\]
where $A +x = \{y+x: y \in A\}$ if $A$ is nonempty, and $\emptyset + x = \emptyset$. Note that $0 \notin \theta_x A$.

With the above notations, direct calculations show that
\begin{equation}\label{dirichlet}
\<f, -\gens^e f\>_{\nu_\rho^\star} = \<\mf{f},-\mf{S}^e\mf{f}\> = \frac{1}{4} \sum_{x,y \in \Z_\star^d} \sum_{A \subset \Z^d_\star}s(y-x) [\mf{f}(A_{x,y}) - \mf{f}(A)]^2.
\end{equation}

The following result shows that the $H_1$ and $H_{-1}$ norms associated to the generators $\mc{S}$ and its environment part $\mc{S}^e$ are equivalent. 

\begin{lemma}\label{lem-a1}
	There exists a finite constant $C = C(d,n,\alpha)$ such that for any $\lambda > 0$ and local function $f$ with degree $n$,
	\[\|f\|_{1,\lambda,\gens^e} \leq \|f\|_{1,\lambda,\gens} \leq C \|f\|_{1,\lambda,\gens^e}.\]
	As a consequence,
	\[C^{-1} \|f\|_{-1,\lambda,\gens^e} \leq \|f\|_{-1,\lambda,\gens} \leq  \|f\|_{-1,\lambda,\gens^e}.\]
\end{lemma}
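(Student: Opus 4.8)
The plan is to derive all four inequalities of the lemma from the single form comparison
\[\<f,(-\gens^t)f\>_{\nu_\rho^\star}\ \le\ C(d,n,\alpha)\,\<f,(-\gens^e)f\>_{\nu_\rho^\star}\qquad\text{for every }f\text{ of degree }n,\]
which I shall prove by a moving--particle (path) argument. The reductions are soft. Since $\gens^t$ is a negative semi-definite self-adjoint operator on $L^2(\nu_\rho^\star)$ (being the symmetric part of the tagged--particle generator, which leaves $\nu_\rho^\star$ invariant, cf.\ \cite{sethuraman2000diffusive}), one has $\|g\|^2_{1,\lambda,\gens}=\|g\|^2_{1,\lambda,\gens^e}+\<g,(-\gens^t)g\>_{\nu_\rho^\star}\ge\|g\|^2_{1,\lambda,\gens^e}$ for all $g$; this is the first inequality of the lemma, and by the variational formula $\|f\|^2_{-1,\lambda,L}=\sup_{g}\{2\<f,g\>_{\nu_\rho^\star}-\|g\|^2_{1,\lambda,L}\}$ it also gives $\|f\|_{-1,\lambda,\gens}\le\|f\|_{-1,\lambda,\gens^e}$. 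Conversely, the displayed estimate (together with the trivial contribution $\lambda\|f\|^2$) yields $\|f\|^2_{1,\lambda,\gens}\le(1+C)\|f\|^2_{1,\lambda,\gens^e}$ for $f$ of degree $n$; and for the matching lower bound on $\|f\|_{-1,\lambda,\gens}$ one restricts the supremum defining $\|f\|^2_{-1,\lambda,\gens^e}$ to test functions $g$ of degree $n$ --- legitimate because $\gens^e$ preserves the degree and $\<f,g\>_{\nu_\rho^\star}$ only feels the degree-$n$ part of $g$ --- applies the $H_1$ comparison to each such $g$, and optimises in the scaling of $g$.

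It remains to prove the displayed estimate. Writing $\<f,(-\gens^t)f\>_{\nu_\rho^\star}=\<\mf f,(-\mf S^t)\mf f\>$ and using the explicit formula for $\mf S^t$ from Appendix~\ref{sec:tools}, the two degree--changing families of terms (those involving $\mf f(A\cup\{z\})$ or $\mf f(A\setminus\{z\})$) drop out of the bilinear form by orthogonality, since $\mf f$ is supported on $\mc E_{d,n}$, and only the two ``shift'' terms survive. Now $A\mapsto\theta_{-z}A$ is a bijection of $\mc E_{d,n}$ with inverse $A\mapsto\theta_{z}A$, one checks $z\in A\iff -z\in\theta_{-z}A$, and $s(\cdot)$ is symmetric, so $(A,z)\mapsto(\theta_{-z}A,-z)$ is a weight--preserving involution; averaging the surviving terms over this involution symmetrises the form to
\[\<f,(-\gens^t)f\>_{\nu_\rho^\star}=\tfrac12\sum_{A\in\mc E_{d,n}}\sum_{z\in\Z^d_\star}w(A,z)\,s(z)\,\big[\mf f(\theta_{-z}A)-\mf f(A)\big]^2,\qquad w(A,z)=(1-\rho)\chi_{\{z\notin A\}}+\rho\,\chi_{\{z\in A\}}\le1.\]

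Next I interpolate between $A$ and $\theta_{-z}A$ by single--particle moves. Write $A=\{x_{(1)},\dots,x_{(n)}\}$ ordered so that $x_{(1)}\cdot z\le\cdots\le x_{(n)}\cdot z$ (ties broken lexicographically), and let $\tau\colon\Z^d_\star\to\Z^d_\star$ be the injection with $\tau(x)=x-z$ for $x\ne z$ and $\tau(z)=-z$, so that $\tau(A)=\theta_{-z}A$ and every move $x\mapsto\tau(x)$ has displacement $-z$ or $-2z$, hence weight $s(z)$ or $s(2z)=2^{-d-\alpha}s(z)$. Moving the particles one at a time in this order, $B_0=A$ and $B_i=\{\tau(x_{(1)}),\dots,\tau(x_{(i)}),x_{(i+1)},\dots,x_{(n)}\}$, every intermediate $B_i$ stays in $\mc E_{d,n}$: the target $\tau(x_{(i)})$ is never the origin, it cannot coincide with an already--shifted $\tau(x_{(k)})$ since $\tau$ is injective, and it cannot coincide with an unshifted $x_{(k)}$, $k>i$, because $\tau(x_{(i)})\cdot z<x_{(i)}\cdot z\le x_{(k)}\cdot z$. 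Hence each step $B_{i-1}\mapsto B_i$ is an exchange $C\mapsto C_{u,v}$ of the kind appearing in \eqref{dirichlet} with weight $s(v-u)$, and $s(z)\le2^{d+\alpha}s(v-u)$. By telescoping and Cauchy--Schwarz, $\big[\mf f(\theta_{-z}A)-\mf f(A)\big]^2\le n\sum_{i=1}^n\big[\mf f(B_i)-\mf f(B_{i-1})\big]^2$, so the right-hand side of the symmetrised identity is at most $\tfrac n2\sum_{A}\sum_{z}\sum_{i}s(z)\big[\mf f(B^{A,z}_i)-\mf f(B^{A,z}_{i-1})\big]^2$. Interchanging the sums, I then bound the multiplicity with which a fixed exchange $C\mapsto C_{u,v}$ is produced: the relevant $z$ must lie in $\{-(v-u),\,-(v-u)/2\}$, giving $O(1)$ choices of $z$; and for each such $z$ the initial configuration $A$ is recovered from $C$ by applying $\tau^{-1}$ to a prefix of $C$ ordered by $q\mapsto q\cdot z$ (the already--shifted particles of $C$ always precede the unshifted ones in that order), giving at most $n+1$ choices of $A$, after which $i$ is determined. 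Thus every exchange of $\gens^e$ is reused at most $C(n)$ times with weight comparable to $s(v-u)$, and \eqref{dirichlet} then yields the displayed estimate with $C=C(d,n,\alpha)$.

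The crux --- and the only place where the dependence of the constant on $n$ genuinely enters --- is the last paragraph: producing a single interpolating path, valid for every $z$ (including small $\|z\|$ and the case $z\in A$, handled here uniformly by $\tau$ together with the ordering) and staying inside $\mc E_{d,n}$ at every intermediate step, and then carrying out the multiplicity count that converts the sum over tagged moves into a bounded multiple of the exchange Dirichlet form. Everything else is bookkeeping or standard functional analysis of the $H_{\pm1}$ norms.
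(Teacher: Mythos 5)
Your proof is correct and follows essentially the same route as the paper: both reduce the lemma to the single Dirichlet-form comparison $\<f,(-\gens^t)f\>_{\nu_\rho^\star}\le C\<f,(-\gens^e)f\>_{\nu_\rho^\star}$ for degree-$n$ functions (the remaining three inequalities being soft consequences of positivity and the variational formula) and prove it by interpolating the shift $A\mapsto\theta_{-z}A$ through single particle exchanges of weight comparable to $s(z)$, using $s(z)=2^{d+\alpha}s(2z)$ for the step of displacement $2z$. If anything, your version is slightly more careful than the paper's: the ordering by $x\cdot z$ guarantees that the intermediate configurations stay in $\mc{E}_{d,n}$ (no collisions, no particle at the origin) and the multiplicity count is made explicit, two points the paper passes over silently.
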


The above result was proved in \cite[Proposition 3.4]{sethuraman2006diffusive} for the nearest-neighbor case. We extend it to the general case.

\begin{proof}
Since
\[\|f\|^2_{1,\lambda,\gens} = \|f\|^2_{1,\lambda,\gens^e} + \<f,(-\gens^t)f\>_{{\nu_\rho^\star}} \quad \text{and} \quad \<f,(-\gens^t)f\>_{{\nu_\rho^\star}} \geq 0,\]
it is obvious that $\|f\|_{1,\lambda,\gens^e} \leq \|f\|_{1,\lambda,\gens}$. To conclude the proof for the $H_1$ bound, we only need to prove
\begin{equation}\label{appen1}
\<f,(-\gens^t)f\>_{{\nu_\rho^\star}} \leq C \<f,(-\gens^e)f\>_{{\nu_\rho^\star}}
\end{equation}
for some constant $C$. Observe that
\begin{align*}
	\<f,(-\gens^t)f\>_{{\nu_\rho^\star}} &= \frac{1}{2} \sum_{z \in \Z_\star^d} s(z) E_{\nu_\rho^\star} \big[(1-\xi(z)) \big(f(\theta_z \xi) - f(\xi)\big)^2\big]\\
	&\leq  \frac{1}{2} \sum_{z \in \Z_\star^d} \sum_{A \subset \Z^d_\star,\atop |A|=n} s(z)   \big[ \mf{f}(\theta_{-z} A) - \mf{f}(A)\big]^2.
\end{align*}
The last inequality comes from the fact that if $\xi (z) = 0$, then $\Psi_A (\theta_z \xi) = \Psi_{\theta_z A} (\xi)$. If $z \notin A$, denote $A = \{x_1,\ldots,x_n\}$ with $x_j \neq z$ for $1 \leq j \leq n$. Then,  $\theta_{-z} A = \{x_1-z,\ldots,x_n-z\}$. We need to move the particles from $A$ to $\theta_{-z} A$.
The idea is as follows: we first move the particle from $x_1$ to $x_1-z$, then from $x_2$ to $x_2 -z$, $\ldots$, and last from $x_n$ to $x_n-z$. Precisely speaking,  let
\[A_0 = A, \quad A_j = \{x_1-z, x_2 - z, \ldots, x_j-z, x_{j+1}, \ldots, x_n\}, \;1 \leq j \leq n.\]
By Cauchy-Schwarz inequality,
\begin{multline*}
	\sum_{z \in \Z_\star^d} \sum_{A \subset \Z^d_\star,\atop z \notin A, |A|=n} s(z)   \big[ \mf{f}(\theta_{-z} A) - \mf{f}(A)\big]^2 \leq  n	\sum_{z \in \Z_\star^d} \sum_{A \subset \Z^d_\star,\atop z \notin A, |A|=n} \sum_{j=0}^{n-1} s(z)  [\mf{f}(A_{j+1}) - \mf{f}(A_j)]^2\\
	\leq n^2 \sum_{x,y \in \Z_\star^d} \sum_{A \subset \Z^d_\star,\atop |A|=n}s(y-x) [\mf{f}(A_{x,y}) - \mf{f}(A)]^2.
\end{multline*}
Similarly, if $z \in A$, denote $A = \{x_1,\ldots,x_{n-1},z\}$, then $\theta_{-z} A = \{x_1-z,\ldots,x_{n-1}-z,-z\}$, and in the last step we move the particle from site $z$ to site $-z$.   Then,
\begin{multline*}
	\sum_{z \in \Z_\star^d} \sum_{A \subset \Z^d_\star,\atop z \in A, |A|=n} s(z)   \big[ \mf{f}(\theta_{-z} A) - \mf{f}(A)\big]^2 \leq  
	n^2 \sum_{x,y \in \Z_\star^d} \sum_{A \subset \Z^d_\star,\atop |A|=n}s(y-x) [\mf{f}(A_{x,y}) - \mf{f}(A)]^2\\
	+ n \sum_{z \in \Z_\star^d} \sum_{A \subset \Z^d_\star,\atop |A|=n}  s (z) \big[ \mf{f}( A_{z,-z}) - \mf{f}(A)\big]^2.
\end{multline*}
Since $s(z) = 2^{d+\alpha} s(2z)$, the last line is bounded by
\[n 2^{d+\alpha} \sum_{x,y \in \Z_\star^d} \sum_{A \subset \Z^d_\star,\atop |A|=n}s(y-x) [\mf{f}(A_{x,y}) - \mf{f}(A)]^2.\]
By \eqref{dirichlet}, there exists a constant $C = C(d,n,\alpha)$ such that $\<f,(-\gens^t)f\>_{{\nu_\rho^\star}} \leq C \<f,(-\gens^e)f\>_{{\nu_\rho^\star}}$.  This concludes the proof for the bound of  $H_1$ norm. 

The inequality associated to $H_{-1}$ norm follows  from the definition of the $H_{-1}$ norm defined in Subsection \ref{subsec:norms} directly, and we leave it to the readers.
\end{proof}

Next, we extend the underlying space $\Z_\star^d$ to $\Z^d$. We mainly focus on degree one functions.   Let $\bar{\mc{E}}_d$ be the family of finite subsets of $\Z^d$, and let $\bar{\mc{E}}_{d,n}$ be those subsets with cardinality $n$.

For degree one coefficient function $\mf{f}: \mathcal{E}_{d,1} \rightarrow \R$, define the extensions $\mf{f}_{\rm ext}$, $\mf{f}_{\odot}: \bar{\mc{E}}_{d,1} \rightarrow \R$ respectively by
\[\mf{f}_{\rm ext} (\{x\}) = \begin{cases}
	\mf{f} (\{x\}) \quad &\text{if } x \neq 0;\\
	\sum_{x \in \Z^d_\star} s(x) \mf{f} (\{x\}) \quad &\text{if } x = 0,
\end{cases}\]   
and 
\[\mf{f}_{\odot} (\{x\}) = \begin{cases}
	\mf{f} (\{x\}) \quad &\text{if } x \neq 0;\\
	0 \quad &\text{if } x = 0.
\end{cases}\]   
We also extend the operator $\mf{S}^e$ to $\mf{S}_{\rm ext}$, which acts on local coefficient functions $\mf{g}: \bar{\mc{E}}_d \rightarrow \R$, as
\[\mf{S}_{\rm ext} \mf{g} (A) = \frac{1}{2} \sum_{x,y \in \Z^d} s(y-x) [\mf{g} (A_{x,y}) - \mf{g} (A)].\]

The following result is an analogue of \cite[Proposition 3.6]{sethuraman2006diffusive}.

\begin{lemma}\label{lem-a2}
There exists a finite constant $C = C(d,\alpha) $ such that for any $\lambda > 0$ and any local coefficient functions $\mf{f}: \mc{E}_d \rightarrow \R$ with degree one,
\[\|\mf{f}\|_{1,\lambda,\mf{S}^e}  \leq \|\mf{f}_{\rm ext}\|_{1,\lambda,\mf{S}_{\rm ext}} \leq C \|\mf{f}\|_{1,\lambda,\mf{S}^e},\]
and
\[ \|\mf{f}\|_{-1,\lambda,\mf{S}^e}  \leq C \|\mf{f}_{\odot}\|_{-1,\lambda,\mf{S}_{\rm ext}}. \]
\end{lemma}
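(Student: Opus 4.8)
The idea is to reduce both assertions to explicit quadratic forms, since everything here involves only degree one functions. First I would observe that on degree one coefficient functions, which we identify with finitely supported functions on $\Z^d_\star$ (resp.\ on $\Z^d$ for $\mf{S}_{\rm ext}$), the operators act as random walk generators: specializing \eqref{dirichlet} to cardinality one gives
\[\langle \mf{f},(-\mf{S}^e)\mf{f}\rangle = \frac12\sum_{x,y \in \Z^d_\star} s(y-x)\big(\mf{f}(x)-\mf{f}(y)\big)^2,\]
and the same computation for $\mf{S}_{\rm ext}$ produces the identical expression with the sum taken over all of $\Z^d$. Since the exclusion part of the generator preserves the degree, both $\mf{S}^e$ and $\mf{S}_{\rm ext}$ are block diagonal in the degree, so in the variational formula for the $H_{-1}$ norms it is enough to test against degree one functions.

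For the $H_1$ comparison I would split the sum defining $\langle \mf{f}_{\rm ext},(-\mf{S}_{\rm ext})\mf{f}_{\rm ext}\rangle$ according to whether the origin occurs in the jump, and use $s(-x)=s(x)$ together with the defining relation $\mf{f}_{\rm ext}(0)=\sum_{x\in\Z^d_\star}s(x)\mf{f}(x)$, obtaining the identity
\[\|\mf{f}_{\rm ext}\|^2_{1,\lambda,\mf{S}_{\rm ext}} = \|\mf{f}\|^2_{1,\lambda,\mf{S}^e} + \lambda\,\mf{f}_{\rm ext}(0)^2 + \sum_{x\in\Z^d_\star} s(x)\big(\mf{f}(x)-\mf{f}_{\rm ext}(0)\big)^2.\]
The two correction terms are non-negative, which gives the left inequality at once. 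The right inequality amounts to bounding both correction terms by $C\|\mf{f}\|^2_{1,\lambda,\mf{S}^e}$ with $C=C(d,\alpha)$ independent of $\lambda$, and this is the only point where the long jump structure enters. The term $\lambda\,\mf{f}_{\rm ext}(0)^2$ is handled by Jensen (using $\sum_x s(x)=1$) together with $\lambda\langle\mf{f},\mf{f}\rangle\le\|\mf{f}\|^2_{1,\lambda,\mf{S}^e}$. The variance-type term cannot be absorbed into $\lambda\langle\mf{f},\mf{f}\rangle$, as that would blow up when $\lambda\to0$; instead I would rewrite it as $\tfrac12\sum_{x,y\in\Z^d_\star}s(x)s(y)(\mf{f}(x)-\mf{f}(y))^2$ and use the elementary bound $\|y-x\|\le\|x\|+\|y\|\le2\|x\|\,\|y\|$, valid for $x,y\in\Z^d_\star$ since $\|x\|,\|y\|\ge1$, which yields $s(x)s(y)\le (2^{d+\alpha}/s^\star)\,s(y-x)$ for all $x,y\in\Z^d_\star$ and hence bounds the variance-type term by $(2^{d+\alpha}/s^\star)\langle\mf{f},(-\mf{S}^e)\mf{f}\rangle\le C\|\mf{f}\|^2_{1,\lambda,\mf{S}^e}$. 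This is the crux of the argument.

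The $H_{-1}$ bound then follows by duality. For any degree one $\mf{g}$, the definitions of $\mf{f}_\odot$ and of the extension give $\langle\mf{f},\mf{g}\rangle=\langle\mf{f}_\odot,\mf{g}_{\rm ext}\rangle$, the origin contributing nothing because $\mf{f}_\odot(0)=0$. Plugging $\mf{g}_{\rm ext}$ as a test function into the variational formula of Subsection \ref{subsec:norms} for $\|\mf{f}_\odot\|_{-1,\lambda,\mf{S}_{\rm ext}}$, applying the $H_1$ upper bound just established (to $\mf{g}$ in place of $\mf{f}$), and rescaling $\mf{g}\mapsto C^{-2}\mf{g}$ gives
\[\|\mf{f}_\odot\|^2_{-1,\lambda,\mf{S}_{\rm ext}} \ge 2\langle\mf{f},C^{-2}\mf{g}\rangle - C^2\big\|C^{-2}\mf{g}\big\|^2_{1,\lambda,\mf{S}^e} = C^{-2}\Big(2\langle\mf{f},\mf{g}\rangle-\|\mf{g}\|^2_{1,\lambda,\mf{S}^e}\Big);\]
taking the supremum over degree one $\mf{g}$ (legitimate by the block diagonality noted above) then yields $\|\mf{f}_\odot\|^2_{-1,\lambda,\mf{S}_{\rm ext}}\ge C^{-2}\|\mf{f}\|^2_{-1,\lambda,\mf{S}^e}$, which is the second claim. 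I expect the main obstacle to be the $\lambda$-uniform control of the variance-type correction term, resolved by the jump-rate comparison $s(x)s(y)\le C\,s(y-x)$; the remainder is bookkeeping with the two quadratic forms.
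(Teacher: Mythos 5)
Your proposal is correct and follows essentially the same route as the paper: the explicit degree-one quadratic-form identity with the origin terms isolated, the key jump-rate comparison $s(x)s(y)\le C\,s(y-x)$ deduced from $\|x-y\|\le 2\|x\|\|y\|$ for lattice points, and the duality/rescaling argument for the $H_{-1}$ bound via $\langle\mf{f},\mf{g}\rangle=\langle\mf{f}_\odot,\mf{g}_{\rm ext}\rangle$. The only (immaterial) differences are that you use the exact variance identity and Jensen where the paper invokes Cauchy--Schwarz together with $s(x)\le 1$.
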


\begin{proof}
Since $\mf{f}$ has degree one, by the definitions of $H_1$ norm and $\mf{f}_{\rm ext}$,
\begin{multline*}
\|\mf{f}_{\rm ext}\|^2_{1,\lambda,\mf{S}_{\rm ext}} = \lambda \sum_{x \in \Z^d} \mf{f}_{\rm ext}^2 (\{x\})  + \frac{1}{4} \sum_{x,y\in \Z^d} s(y-x) \big[\mf{f}_{\rm ext} (\{y\}) - \mf{f}_{\rm ext} (\{x\}) \big]^2\\
=  \lambda \Big\{ \sum_{x \in \Z^d_\star} \mf{f}^2 (\{x\}) + \Big(\sum_{x \in \Z^d_\star} s(x) \mf{f} (\{x\})\Big)^2\Big\} 
+ \frac{1}{4} \sum_{x,y\in \Z^d_\star} s(y-x) \big[\mf{f} (\{y\}) - \mf{f} (\{x\}) \big]^2 \\+ \frac{1}{2} \sum_{x\in \Z^d_\star} s(x) \big[ \sum_{y \in \Z_\star^d} s(y) \mf{f} (\{y\}) - \mf{f} (\{x\}) \big]^2.
\end{multline*}
Obviously, $\|\mf{f}\|_{1,\lambda,\mf{S}^e}  \leq \|\mf{f}_{\rm ext}\|_{1,\lambda,\mf{S}_{\rm ext}}$. By Cauchy-Schwarz inequality and the fact that $s(x) \leq 1$,
\begin{multline*}
	\|\mf{f}_{\rm ext}\|^2_{1,\lambda,\mf{S}_{\rm ext}} \leq 2 \lambda \sum_{x \in \Z^d_\star} \mf{f}^2 (\{x\})  + \frac{1}{4} \sum_{x,y\in \Z^d_\star} s(y-x) \big[\mf{f} (\{y\}) - \mf{f} (\{x\}) \big]^2 \\
	+ \frac{1}{2} \sum_{x,y\in \Z^d_\star} s(x) s(y)\big[  \mf{f} (\{y\}) - \mf{f} (\{x\}) \big]^2.
\end{multline*}
Since $s(x) s(y) \leq 2^{d+\alpha} s(x-y)$ for $x,y \in \Z_\star^d$ and $x \neq y$,  
\[	\|\mf{f}_{\rm ext}\|^2_{1,\lambda,\mf{S}_{\rm ext}}  \leq 2 \lambda \sum_{x \in \Z^d_\star} \mf{f}^2 (\{x\})  + \frac{1+2^{d+\alpha+1}}{4}  \sum_{x,y\in \Z^d_\star} s(y-x) \big[\mf{f} (\{y\}) - \mf{f} (\{x\}) \big]^2 \leq C \|\mf{f}\|^2_{1,\lambda,\mf{S}^e}\]
for some constant $C=C(d,\alpha)$.  This concludes the proof for the $H_1$ bound. 

For the $H_{-1}$ bound,
\[	\|\mf{f}\|^2_{-1,\lambda,\mf{S}^e} = \sup_{\mf{g}} \Big\{ 2 \<\mf{f},\mf{g}\> -  \|\mf{g}\|_{1,\lambda,\mf{S}^e}^2\Big\}
\leq \sup_{\mf{g}} \Big\{ 2 \<\mf{f}_{\odot},\mf{g}_{\rm ext}\> -  C^{-2} \|\mf{g}_{\rm ext}\|_{1,\lambda,\mf{S}_{\rm ext}}^2\Big\} \leq C^2 \|\mf{f}_{\odot}\|^2_{-1,\lambda,\mf{S}_{\rm ext}},\]
where the above supremum is over coefficient functions $\mf{g}: \mc{E}_d \rightarrow \R$ which are $L^2$ integrable with degree one. This completes the proof.
\end{proof}

\bibliographystyle{plain}
\bibliography{zhaoreference.bib}
\end{document}